\newtheorem{theorem}{Theorem}[section]
\def\B#1#2{{#1\choose #2}}
\title{On the Dimension and Euler characteristic of random graphs}
\date{December 23, 2011}
\author{Oliver Knill}
\email{knill@math.harvard.edu}
\address{
        Department of Mathematics \\
        Harvard University \\
        Cambridge, MA, 02138
        }
\subjclass{Primary: 05C80,05C82,05C10,90B15,57M15 }
\keywords{Random Graph Theory, Complex Networks, Graph Dimension, Graph Curvature, Euler characteristic}
\begin{document}
\maketitle

\begin{abstract}
The inductive dimension ${\rm dim}(G)$ of a finite undirected graph $G$ is a rational number
defined inductively as $1$ plus the arithmetic mean of the dimensions of the unit spheres 
${\rm dim}(S(x))$ at vertices $x$ primed by the requirement that the empty graph has dimension $-1$.
We look at the distribution of the random variable ${\rm dim}$ on the Erd\"os-R\'enyi probability space $G(n,p)$,
where each of the $n(n-1)/2$ edges appears independently with probability $0\leq p \leq 1$.
We show that the average dimension $d_n(p) = {\rm E}_{p,n}[{\rm dim}]$
is a computable polynomial of degree $n(n-1)/2$ in $p$. The explicit formulas allow 
experimentally to explore limiting laws for the dimension of large graphs. In this context of random 
graph geometry, we mention explicit formulas for the expectation ${\rm E}_{p,n}[{\chi}]$ of the 
Euler characteristic $\chi$, considered as a random variable on $G(n,p)$. We look experimentally at
the statistics of curvature $K(v)$ and local dimension ${\rm dim}(v) = 1+{\rm dim}(S(v))$ 
which satisfy $\chi(G) = \sum_{v \in V} K(v)$ and ${\rm dim}(G) = \frac{1}{|V|}  \sum_{v \in V} {\rm dim}(v)$.
We also look at the signature functions
$f(p)={\rm E}_{p}[{\rm dim}], g(p)={\rm E}_p[\chi]$ and matrix values functions
$A_{v,w}(p) = {\rm Cov}_p[{\rm dim}(v),{\rm dim}(w)], B_{v,w}(p) = {\rm Cov}[K(v),K(w)]$ on the 
probability space $G(p)$ of all subgraphs of a host graph $G=(V,E)$ with the same vertex set $V$, 
where each edge is turned on with probability $p$. 
\end{abstract}

\section{Dimension and Euler characteristic of graphs} 

The inductive dimension for graphs $G=(V,E)$ is formally close to the Menger-Uhryson 
dimension in topology. It was in \cite{elemente11} defined as 
$$ {\rm dim}(\emptyset)= -1, {\rm dim}(G) = 1+\frac{1}{|V|} \sum_{v \in V} {\rm dim}(S(v))  \; ,  $$
where $S(v)=\{ w \in V \; | \; (w,v) \in E \; \}, \{ e=(a,b) \in E \; | \; (v,a) \in E, (v,b) \in E \; \}$ 
denotes the unit sphere of a vertex $v \in V$. 
The inductive dimension is useful when studying the geometry of graphs. We can look at the 
local dimension ${\rm dim}(v)=1+{\rm dim}(S(v))$ of a vertex which is a local property
like "degree" ${\rm deg}(v)={\rm ord}(S(v))$, or "curvature" $K(v)$ defined below. 
Dimension is a rational number defined for every finite graph; however it is in general not an integer. \\

The dimension is zero for graphs $P_n$ of size $0$, graphs which are completely disconnected. 
It is equal to $n-1$ for complete graphs $K_n$ of order $n$, where the size 
is $\B{n}{2} = n(n-1)/2$. Platonic solids can have dimension $1$ like for the cube and dodecahedron, it can be
$2$ like for the octahedron and icosahedron or be equal to $3$ like for the tetrahedron. The 600 cell with 
120 vertices is an example of a three dimensional graph, where each unit sphere is a two dimensional icosahedron.
Figure~\ref{archimedean} illustrates two Archimedean solids for which fractional dimensions occur 
in familiar situations. All Platonic, Archimedean and Catalan solids are graph theoretical polyhedra: 
a finite truncation or kising process produces two-dimensional graphs. \\

Computing the dimension of the unit spheres requires to find all unit spheres for vertices in the unit 
sphere $S(x)$ of a vertex and so on. The local dimension ${\rm dim}(x) = 1+{\rm dim}(S(x))$ satisfies by definition
\begin{equation}
 {\rm dim}(G) = \frac{1}{|V|} \sum_{x \in V} {\rm dim}(x)  \; . 
\end{equation}
There is an upper bound of the dimension in terms of
the average degree ${\rm deg}(G) = \frac{1}{|V|} \sum_V {\rm deg}(v)$ of a graph:
${\rm dim}(G) \leq {\rm deg}(G)-1$ with
equality for complete graphs $K_n$. The case of trees shows
that ${\rm dim}(G)=1$ is possible for arbitrary large $|V|$ or ${\rm deg}(G)$. \\

\begin{figure}
\scalebox{0.34}{\includegraphics{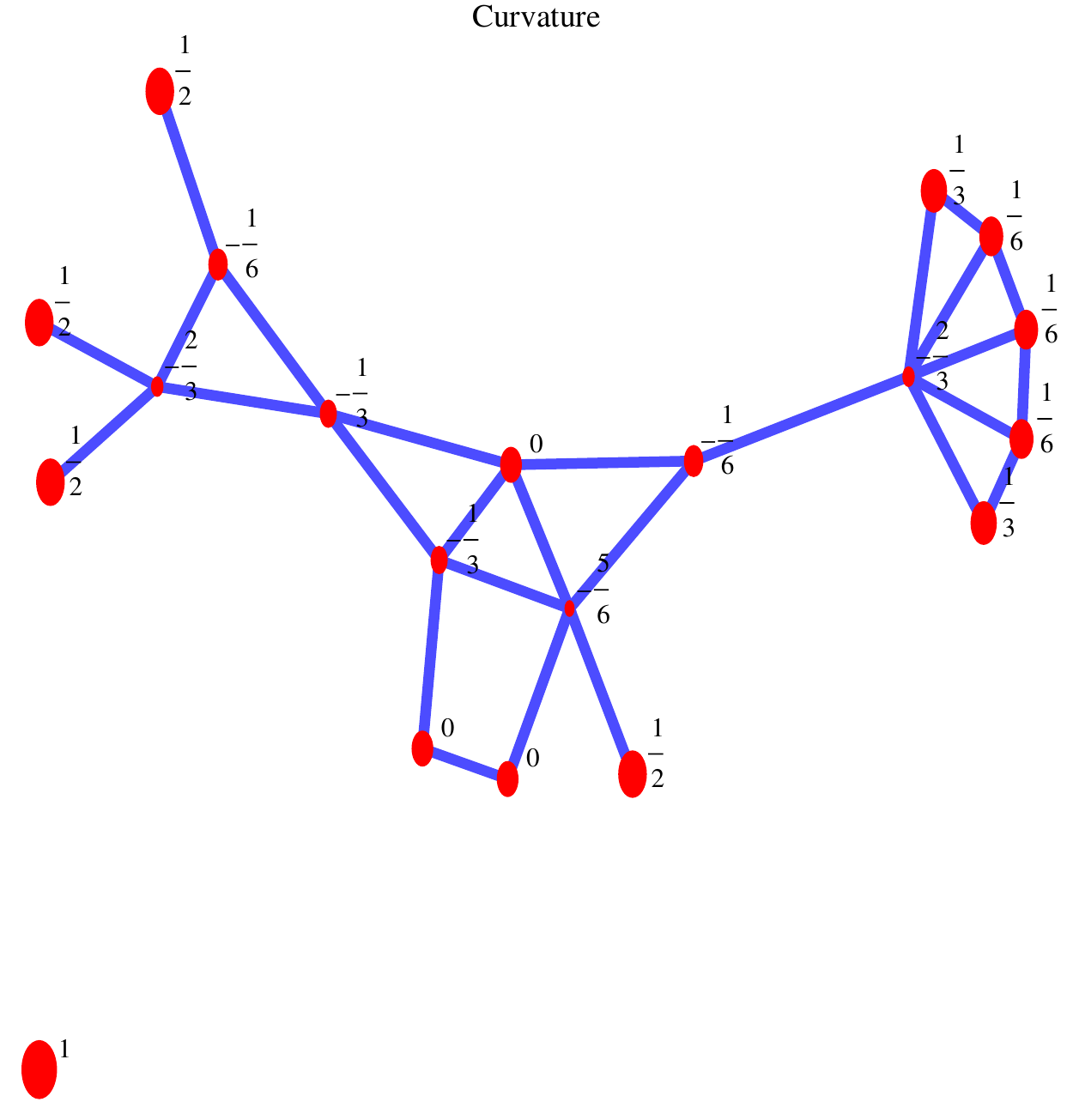}} 
\scalebox{0.34}{\includegraphics{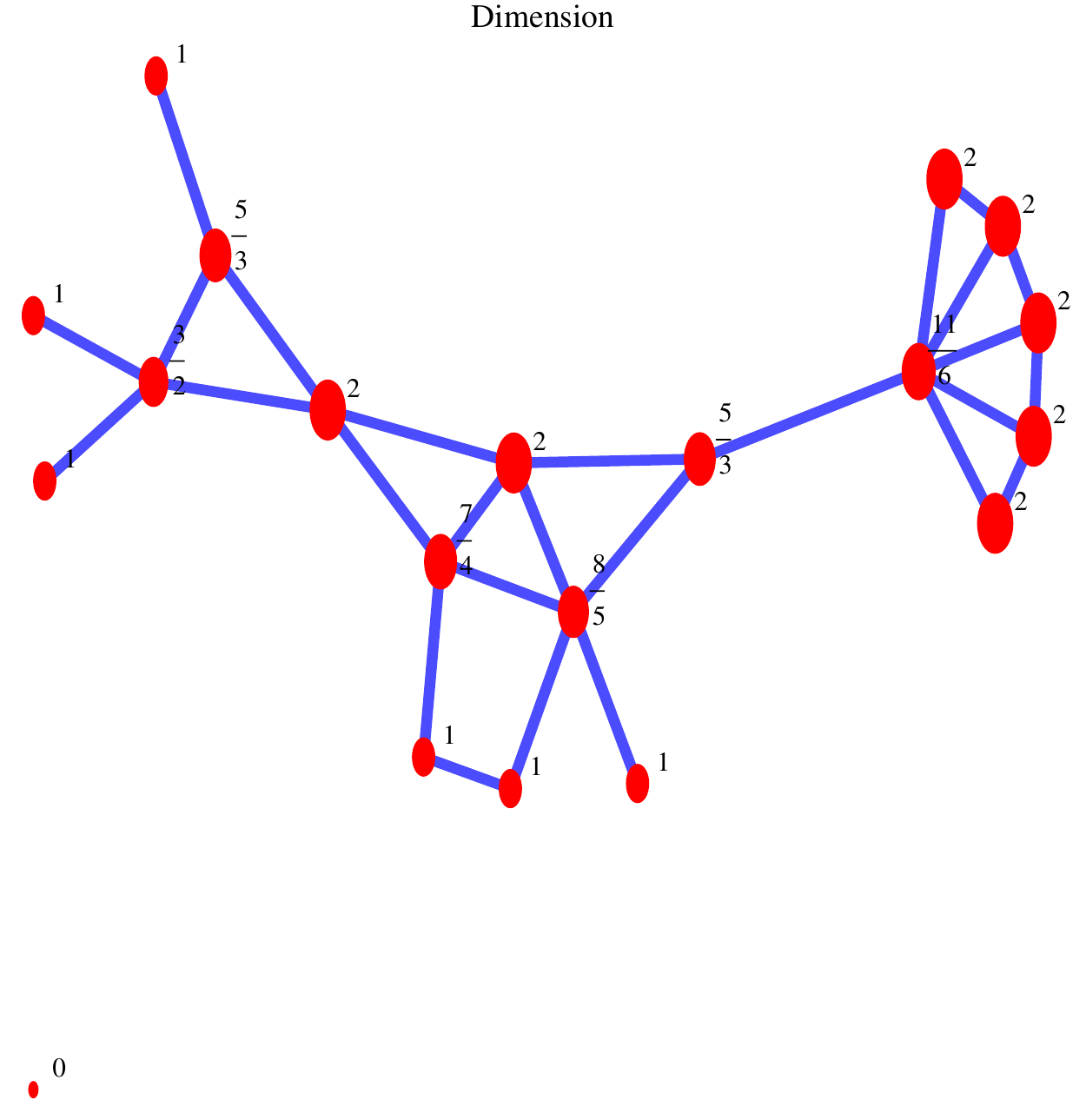}} 
\caption{
This figure shows a particular graph $G$ with $20$ vertices
and Euler characteristic $\chi(G) = 1$. 
The left side shows the graph with curvatures
$$\{{0,-1,-2,6,3,-5,-2,-4,3,0,-4,-1,0,2,1,1,1,2,3,3}\}/6$$
which sum up to the Euler characteristic $1$ by Gauss-Bonnet. 
The right figure shows the same graph with local dimensions
$$\{2,5/3,2,0,1,8/5,7/4,3/2,1,1,11/6,5/3,1,2,2,2,2,2,1,1\;\} $$
which average to the dimension of the graph ${\rm dim}(G)=1801/1200$.
}
\label{illustration}
\end{figure}

An other natural quantity for graphs is the Euler characteristic 
\begin{equation}
 \chi(G) = \sum_{k=0}^{\infty} (-1)^k v_k \; , 
 \label{eulercharacteristicdef}
\end{equation}
where $v_k$ is the number of $K_{k+1}$ subgraphs in $G$. We noted in \cite{cherngaussbonnet} that
it can be expressed as the sum over all curvature
$$  K(p) = \sum_{k=0}^{\infty} (-1)^k \frac{V_{k-1}(x)}{k+1} \; , $$
where $V_k(x)$ is the number of $K_{k+1}$ subgraphs in the sphere $S(x)$ at a vertex $x$. 
As in the continuum, the Gauss-Bonnet formula
\begin{equation}
 \label{gaussbonnet}
 \chi(G) = \sum_{x \in V} K(x) 
\end{equation}
relates a local quantity curvature with the global topological invariant $\chi$. 
For example, for a graph without triangles and especially for trees, 
the curvature is $K(v) = 1-{\rm deg}(v)/2$. For graphs without $4$ cliques
and especially two dimensional graphs $K(v) = 1-{\rm deg}(v)/2 - {\rm size}(S(v))/3$. 
For geometric graphs, where each sphere $S(v)$ is a cyclic graph, $K(v) = 1-{\rm deg}(v)/6$.
For the standard Petersen graph $P_{5,2}$, the dimension is $1$, the 
local dimension constant $1$, the Euler characteristic $-5$ and the curvature is constant $-1/2$ at every vertex. 
The Petersen graph $P_{9,3}$ has dimension $4/3$ and Euler characteristic $-6$. There are $9$ 
vertices with curvature $-1/2$ and $9$ with curvature $-1/6$. The sum of curvatures is $-6$.  \\

The Gauss-Bonnet relation (\ref{gaussbonnet}) is already useful for computing the Euler characteristic. 
For inhomogeneous large graphs especially, the Gauss-Bonnet-Chern formula simplifies in an elegant way 
the search for cliques in large graphs. An other application is the study of higher dimensional polytopes. 
It follows immediately for example that there is no $4$-dimensional polytope
- they are usually realized as a convex set in $R^5$ - for which the graph theoretical unit sphere is 
a three dimensional 600 cell: the Euler characteristic is $2$ in that dimension
and the curvature would by regularity have to be $2/|V|$. But curvature of such a graph would be constant
and since the 600 cell has 120 vertices, 720 edges, 1200 faces and 600 chambers, the curvature  were constant
$$ K = \frac{V_{-1}}{1} - \frac{V_0}{2} + \frac{V_1}{3} - \frac{V_2}{4} +\frac{V_3}{5} 
     = \frac{1}{1} - \frac{120}{2} + \frac{720}{3} - \frac{1200}{4} + \frac{600}{5} = 1 $$
which would force $|V|=2$ and obviously does not work. While such a result could certainly also 
be derived also with tools developed by geometers like Schl\"afli or Coxeter, the just graph 
theoretical argument is more beautiful. It especially does not need any ambient space realization
of the polytope. \\

\begin{figure}
\parbox{6.0cm}{ \scalebox{0.22}{\includegraphics{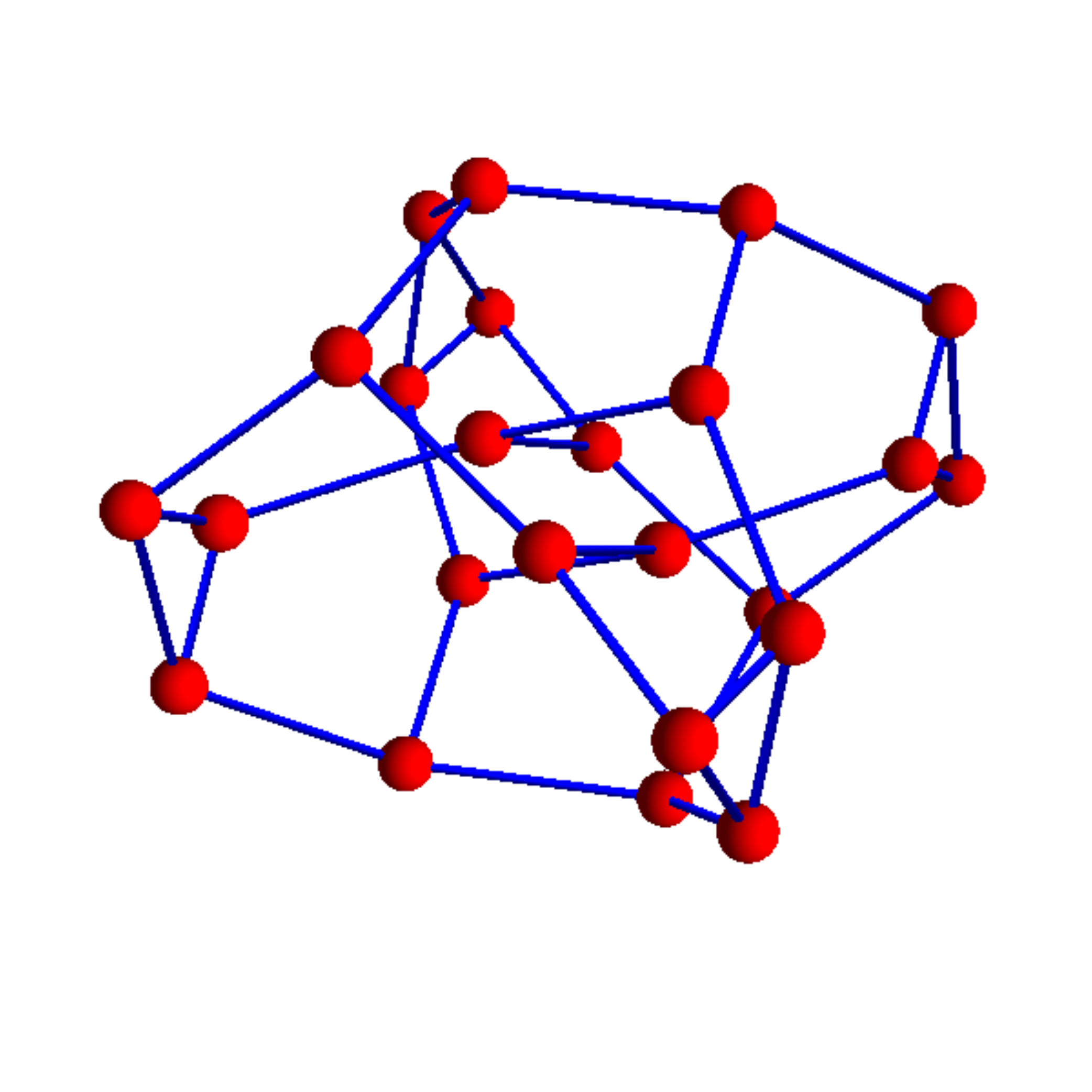}} }
\parbox{6.0cm}{ \scalebox{0.22}{\includegraphics{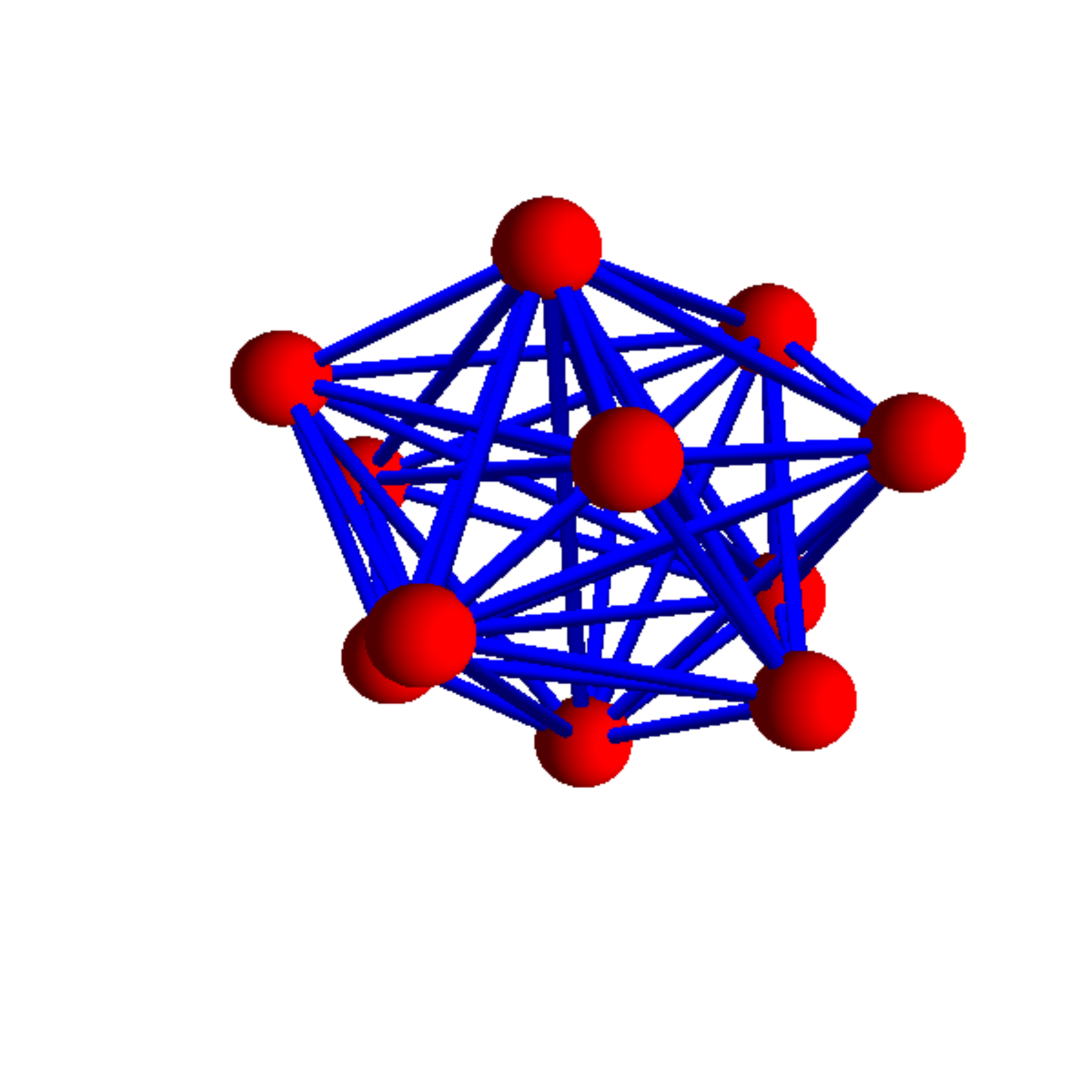}} }
\caption{
The Petersen graph $P(12,4)$ of dimension $4/3$ and Euler characteristic $-8$ can be seen
to the left. The curvatures take $12$ times the value $-1/2$ and $12$ times the value $-1/6$.
To the right, we see the Turan graph $T(13,4)$ of dimension $3$ and 
Euler characteristic $-23$. The curvatures take values $-2$ nine times and $-5/4$ four times. 
}
\label{petersenturan}
\end{figure}

We have studied the Gauss-Bonnet theme in a geometric setting 
for $d$-dimensional graphs for which unit spheres of a graph satisfy properties
familiar to unit spheres in $R^d$. In that case, the results look more similar to differential geometry
\cite{cherngaussbonnet}. \\

The curvature for three-dimensional graphs for example is zero everywhere and positive sectional 
curvature everywhere leads to definite bounds on the diameter of the graph. 
Also for higher dimensional graphs, similar than Bonnet-Schoenberg-Myers bounds assure in the continuum, 
positive curvature forces the graph to be of small diameter, 
allowing to compute the Euler characteristic in finitely many cases allowing in principle to 
answer Hopf type questions about the Euler characteristic of finite graphs with positive 
sectional curvature by checking finitely many cases. Such questions are often open in classical 
differential geometry but become a finite combinatorial problem in graph theory. 
Obviously and naturally, many question in differential geometry, whether variational, spectral or topological can be 
asked in pure graph theory without imposing any additional structure on the graph.  Dimension, 
curvature and Euler characteristic definitely play an important role in such quests. \\

\begin{figure}
\parbox{6.0cm}{ \scalebox{0.22}{\includegraphics{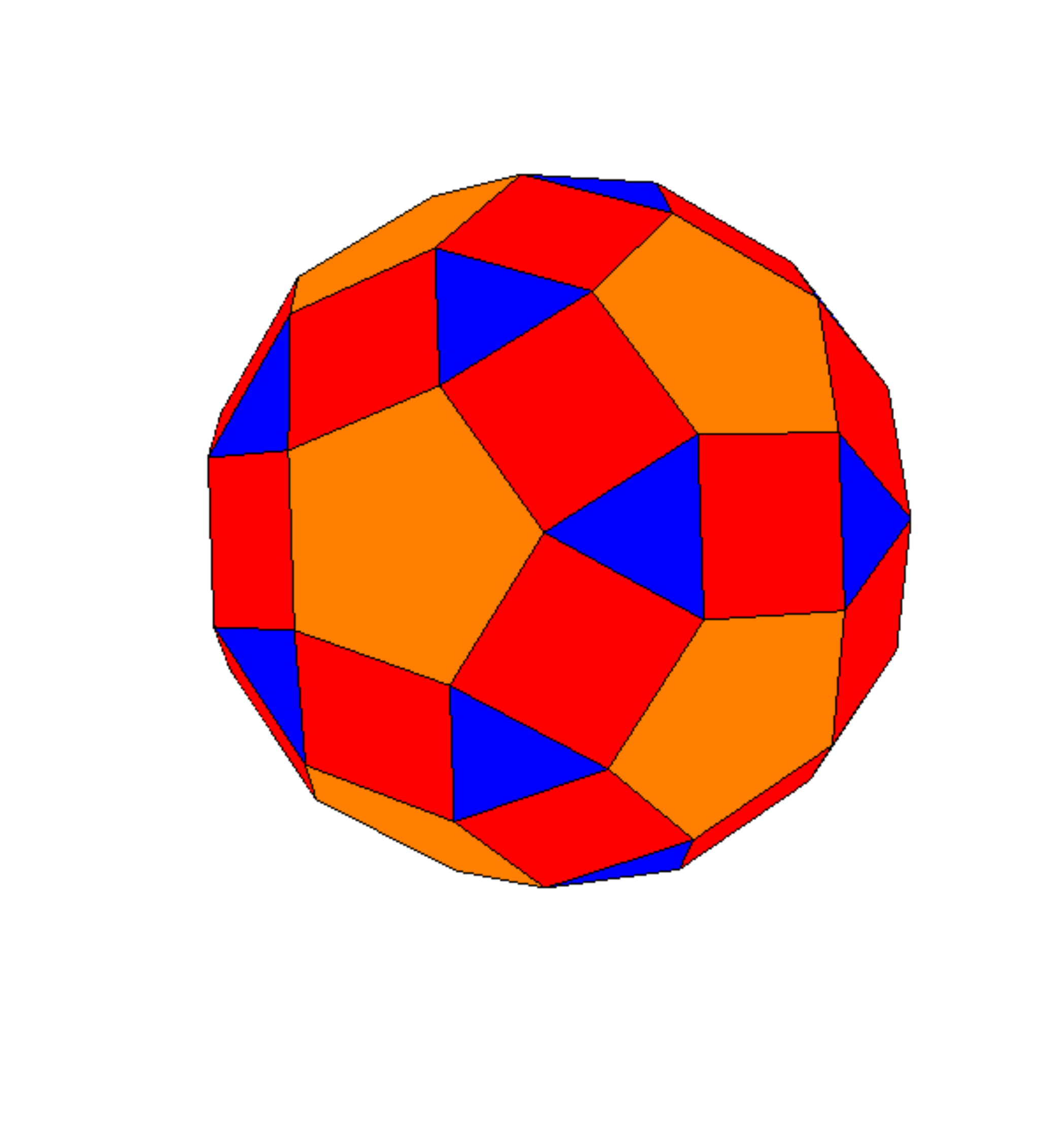}} }
\parbox{6.0cm}{ \scalebox{0.22}{\includegraphics{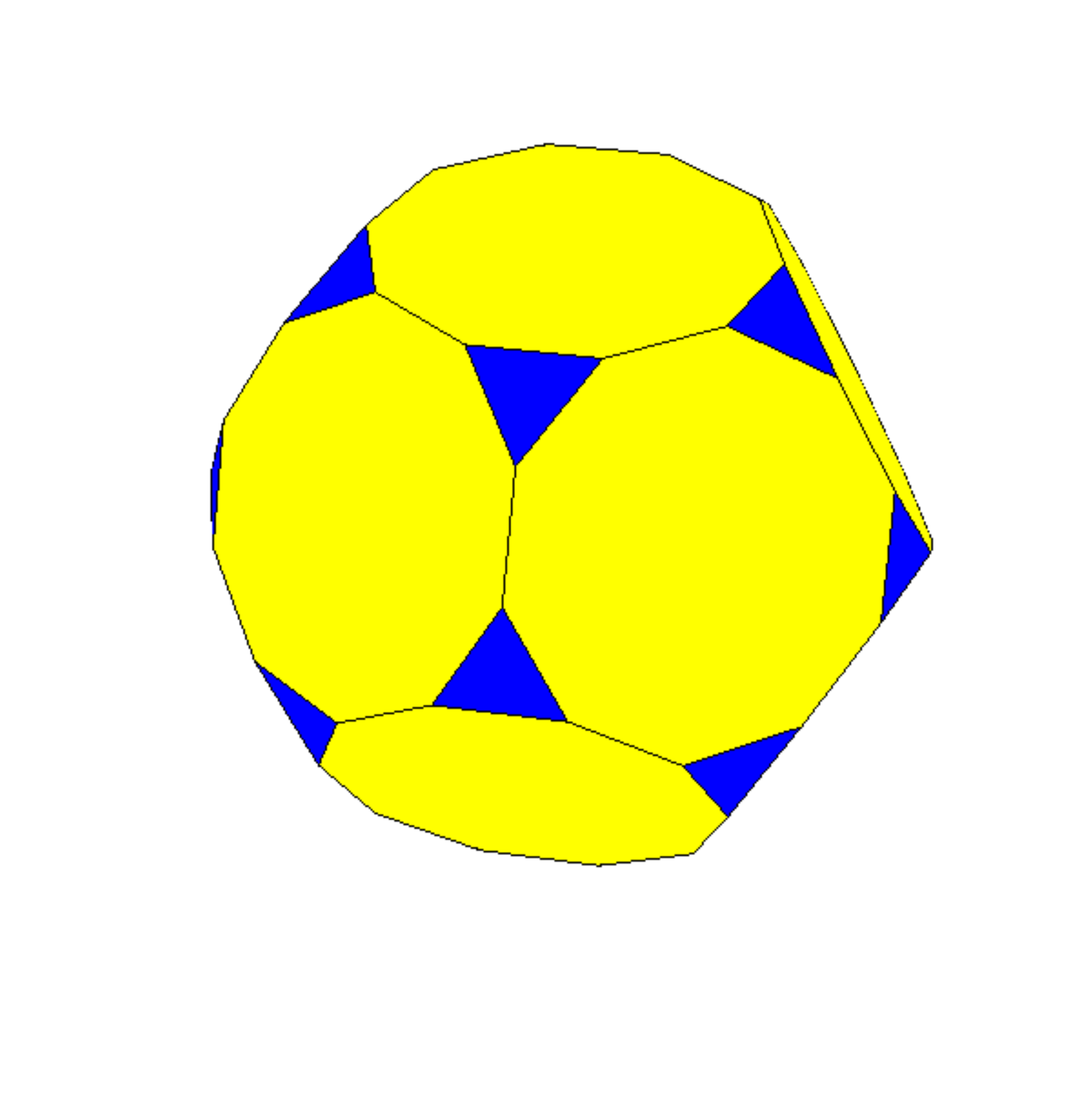}} }
\caption{
The dimension of the {\bf small rhombicosidodecahedron} is $3/2$: each point has dimension $3/2$ because
each unit sphere is a graph with $4$ vertices, where two points have dimension $1$ and two points have
dimension $0$. The unit sphere has dimension $1/2$. 
The dimension of the {\bf truncated dodecahedron} is $5/3$ because every 
unit sphere has dimension $2/3$.  }
\label{archimedean}
\end{figure}

What are the connections between dimension and Euler characteristic and curvature? We don't know much yet,
but there are indications of more connections: all flat connected graphs we have seen for example are geometric with 
uniform constant dimension like cyclic graphs or toral graphs. An interesting question is to study graphs where
Euler characteristic is extremal, where curvature (also Ricci or scalar analogues of curvature) 
is extremal or where curvature is constant. 
So far, the only connected graphs with constant curvature we know of are complete graphs $K_n$ with curvature 
$1/n$, cyclic graphs $C_n$ with curvature $0$, discrete graphs $P_n$ with 
curvature is $1$, the octahedron $O$ with curvature $1/6$, the icosahedron $I$ with curvature $1/12$,
higher dimensional cross polytopes with $2n$ vertices and curvature $1/(2n)$,
the 600 cell 120 vertices, where each unit sphere is an icosahedron and which is ``flat"
$$ K = \frac{V_{-1}}{1} - \frac{V_0}{2} + \frac{V_1}{3} - \frac{V_2}{4} 
     = \frac{1}{1} - \frac{12}{2} + \frac{30}{3} - \frac{20}{4} = 0 $$
like for any three dimensional geometric graph \cite{cherngaussbonnet},
twisted tori $K_{n,m}$ with curvature $0$ as well as higher dimensional regular tesselations of tori. 
We could not yet construct a graph with constant negative curvature
even so they most likely do exist. We start to believe that geometric graphs - for which local dimension is constant 
like the ones just mentioned - are the only connected constant curvature graphs. \\

We look in this article at connections with random graph theory 
\cite{bollobas,nbw2006}, an area of mathematics which has become useful for the 
study of complex networks \cite{CohenHavlin,newman2010,vansteen,ibe}. The emergence of interest in 
web graphs, social networks, neural networks, complex proteins or nano technology makes it an
active area of research.

\section{Random subgraphs of the complete graph}

We inquire in this section about the dimension of a typical graph in the probability space $G(n,1/2)$,
where each edge is turned on with probability $1/2$. To investigate this, we can look at all possible 
graphs on a fixed vertex set of cardinality $n$ and find the dimension expectation by computing 
the dimension of each graph and adding this up. When we counted dimensions for small $n$ by brute force,
we noticed to our surprise that the sum of all dimensions of subgraphs is an integer. 
Our limit for brute force summation was $n=7$, where we have already $2^{21}=2'097'152$ graphs. 
For the next entry $n=8$, we would have had to check $128$ times more graphs. \\

Note that as usual, we do not sum over all subgraphs of $K_n$ but all subgraphs of $K_n$ for which 
$|V|=v_0=n$. While this makes no difference for dimension because isolated points have dimension $0$,
it will matter for Euler characteristic later on, because isolated points have Euler characteristic $1$.
For $K_3$ for example, there are $3$ graphs with dimension $1$, one graph with  
dimension $2$ and $3$ graphs with dimension $2/3$. The sum of all dimensions is $7$. 
For $K_4$, the sum of dimensions over all $2^6=64$  subgraphs is $75$:
there are $22$ subgraphs of dimension $1$, there are $12$ of dimensions $5/3$ and $3/4$ each,
$6$ of dimensions $1/2$ and $2$ each, $4$ of dimension $3/2$  and one of dimension $0$ and $3$
each. Also for $K_5$, dimension $1$ appears most with $237$ subgraphs followed with $120$ graphs
of dimension $22/15$. For $K_6$ already, we have two integer champions: dimension $1$ appears for
$3'981$ subgraphs and dimension $2$ for $2'692$ subgraphs. The total sum of dimension is $53'275$.

\begin{theorem}[Average dimension on $G(n,1/2)$]
The average dimension on $G(n,1/2)$ satisfies the recursion
$$ d_{n+1} = 1+\sum_{k=0}^n \frac{\B{n}{k}}{2^n} d_k  \; , $$
where $d_0=-1$ is the seed for the empty graph. The sum over all dimensions 
of all order $n$ subgraphs of $K_n$ is an integer. 
\label{randomgraph}
\end{theorem}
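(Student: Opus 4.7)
The plan is to derive the recursion by taking expectations of the defining identity $\dim(G) = 1 + \frac{1}{|V|}\sum_{v} \dim(S(v))$ applied to one vertex, and then to deduce the integrality claim by induction on the quantity $S_n := 2^{\B{n}{2}} d_n$.

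For the recursion, I would fix $G \sim G(n+1,1/2)$ together with a distinguished vertex $v_0$. By linearity of expectation and the permutation-invariance of the Erd\"os--R\'enyi measure, every sphere $S(v)$ has the same expected dimension, so
\[ d_{n+1} = 1 + E[\dim(S(v_0))]. \]
I then condition on the degree of $v_0$: the event $\{\deg(v_0)=k\}$ has probability $\B{n}{k}/2^n$, and, crucially, the $\B{n}{2}$ edges among $V\setminus\{v_0\}$ are independent of the $n$ edges incident to $v_0$. Consequently, given the neighbor set $N$ of $v_0$ (of some size $k$), the induced subgraph $S(v_0)=G[N]$ is distributed as an element of $G(k,1/2)$ on the labeled vertex set $N$, so $E[\dim(S(v_0))\mid \deg(v_0)=k] = d_k$. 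Summing over $k$ yields the stated recursion.

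For the integrality statement, observe that $S_n = 2^{\B{n}{2}} d_n$ is precisely the sum of $\dim(H)$ over the $2^{\B{n}{2}}$ labeled spanning subgraphs $H$ of $K_n$. I would proceed by strong induction, with base case $S_0 = -1$. Multiplying the recursion through by $2^{\B{n+1}{2}} = 2^{\B{n}{2}+n}$ produces
\[ S_{n+1} = 2^{\B{n+1}{2}} + \sum_{k=0}^{n} \B{n}{k}\, 2^{\B{n}{2}-\B{k}{2}}\, S_k. \]
Since $\B{n}{2} \ge \B{k}{2}$ for $0\le k\le n$, every term on the right is an integer whenever $S_0,\dots,S_n$ are, and the induction closes.

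The only step that demands genuine care is the exchangeability assertion that, conditionally on which vertices form the neighborhood of $v_0$, the edges inside that neighborhood retain the $G(k,1/2)$ distribution. This is a standard consequence of the independence of edges in the Erd\"os--R\'enyi model, but the whole recursion—and with it the integrality of $S_n$—rests on it, so I would state it explicitly rather than regard it as self-evident. Everything else is a mechanical computation.
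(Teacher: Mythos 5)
Your proposal is correct and follows essentially the same route as the paper: fix (or add) one distinguished vertex, use symmetry and edge-independence to see that its sphere, conditioned on having $k$ neighbors, is again a uniform random graph on $k$ vertices with expected dimension $d_k$, and then clear the denominator $2^{\B{n}{2}}$ to get integrality of the dimension sums by induction. Your write-up is in fact a cleaner rendering of the paper's argument, since it states explicitly the exchangeability step and the multiplicity factors $2^{\B{n}{2}-\B{k}{2}}$ that the paper's displayed count $\sum_k (f_k+g(k))\B{n}{k}$ leaves implicit.
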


\begin{proof}
Let $g(n)=2^{\B{n}{2}}$ be the number of graphs on the vertex set $V=\{ 1,...,n \; \}$,
and let $f(n)$ the sum of the dimensions of all subgraphs of the complete graph with $n$
vertices. We can find a recursion for $f(n)$ by adding a $(n+1)$'th vertex point $x$ and then 
count the sum of the dimensions over all subgraphs of $K_{n+1}$ 
vertices by partitioning this set of subgraphs up into the set $Y_k$ which have $k$ edges connecting
the old graph to the new vertex. There are $\B{n}{k}$ possibilities to build such 
connections. In each of these cases, the unit sphere $S(x)$ is a complete
graph of $k$ vertices. We get so the sum $\sum_{k=0}^n (f_k+g(k)) \B{n}{k}$ of 
dimensions of such graphs because we add $1$ to each of the $\B{n}{k}$ cases. 
From this formula, we can see that the sum of dimensions is an integer. 
The dimension itself satisfies the recursion
$$ d_{n+1} = 2^{\B{n}{2}-\B{n+1}{2}} \sum_{k=0}^n (d_k + 1) \B{n}{k} \; . $$ 
With $\B{n}{2}-\B{n+1}{2} = -n$ and using $2^{-n} \sum_{k=0}^n \B{n}{k} = 1$, we get
the formula.
\end{proof}

{\bf Remarks.} \\
{\bf 1.} We see that $d_{n+1}$ is $1$ plus a Bernoulli average of the sequence $\{ d_k \; \}_{k=0}^n$.  \\
{\bf 2.} Theorem~\ref{randomgraph} will be generalized in Theorem~\ref{randomgraph2} from $p=1/2$ to general $p$. 

\section{Average dimension on classes of graphs}

We can look at the average dimension on subclasses of graphs. For example, what is
the average dimension on the set of all $2$-dimensional graph with $n$ vertices? This is not so easy
to determine because we can not enumerate easily all two-dimensional graphs of order $n$. 
Only up to discrete homotopy operations, 
the situation for two dimensional finite graphs is the same as for two dimensional 
manifolds in that Euler characteristic and the orientation determines the equivalence class.  \\

We first looked therefore at the one-dimensional case. Also here,
summing up the dimensions of all subgraphs by brute force showed that the sum is always 
an integer and that the dimension is constant equal to $3/4$. This is true for a general
one dimensional graph without boundary, graphs which are
the disjoint union of circular graphs. 

\begin{theorem}[3/4 theorem]
For a one-dimensional graph $G=(V,E)$ without boundary, the average dimension of all 
subgraphs of $G$ is always equal to $3/4$. The sum of the dimensions of subgraphs $H$
of $G$ with the same vertex set $V$ is an integer. 
\end{theorem}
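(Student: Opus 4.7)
The plan is to reduce the whole claim to a one-vertex expectation. The first step is to identify one-dimensional graphs without boundary as disjoint unions of cycles $C_{n_1},\ldots,C_{n_r}$ of length $n_i \geq 4$; the case $n_i=3$ must be excluded because $C_3=K_3$ has dimension $2$, not $1$. The crucial structural payoff is that $G$ is triangle-free, so in every subgraph $H$ of $G$ (sharing the same vertex set $V$) and at every vertex $v$, the unit sphere $S_H(v)$ is an edgeless graph on at most two vertices.

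The second step is to compute the local dimension $\dim_H(v)=1+\dim(S_H(v))$ from this structural input. Letting $X_v\in\{0,1,2\}$ count the edges of $H$ incident to $v$, the inductive definition of dimension forces $\dim_H(v)=0$ when $X_v=0$ (empty sphere, dimension $-1$) and $\dim_H(v)=1$ when $X_v\in\{1,2\}$ (nonempty edgeless sphere, whose dimension computes to $0$ either way). Thus $\dim_H(v)$ is the indicator of $\{X_v\geq 1\}$, depending only on the two independent fair coin flips associated to the two $G$-edges at $v$.

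The third step is taking expectations and summing. Since each incident edge is present with probability $1/2$ independently,
$$ E[\dim_H(v)] = \Pr[X_v\geq 1] = \tfrac{3}{4}. $$
Averaging the identity $\dim(H)=\tfrac{1}{|V|}\sum_v \dim_H(v)$ over the uniform choice of $H$ then gives $E[\dim(H)]=3/4$. For the integrality claim, the same casework yields exactly $3\cdot 2^{m-2}$ subgraphs of $G$ with $X_v\geq 1$ (three of the four joint states of the two incident edges contribute, the other $m-2$ edges are free), and this count is the same for every $v$, so
$$ \sum_H \dim(H) = \frac{1}{|V|}\sum_{v\in V}\sum_H \dim_H(v) = 3\cdot 2^{m-2}, $$
which is an integer because each cycle contributes $n_i\geq 4$ edges and hence $m\geq 4$.

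The only delicate step is the opening structural reduction, where one must carefully exclude triangular cycles in order to guarantee that the two cycle-neighbors of $v$ are never adjacent in $G$, hence never in $H$. Once that is settled, the whole argument reduces to a two-edge coin-flip computation, so I do not expect any genuine obstacle beyond that initial characterization.
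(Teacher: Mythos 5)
Your proof is correct, and it takes a more direct route than the paper. The paper first reduces to a single connected cycle $C_n$, $n\geq 4$, using the weighted-average formula for the dimension of a disjoint union, and then argues by induction on $n$: a vertex inserted in the middle of an edge has dimension $1$ with probability $3/4$ and $0$ with probability $1/4$, while the expected dimensions of the old vertices are unchanged; integrality is then deduced from the fact that the number of subgraphs is divisible by $4$. You bypass both the connectedness reduction and the induction: the observation that a $1$-dimensional graph without boundary is triangle-free, so that every unit sphere in every subgraph $H$ is edgeless and the local dimension $\dim_H(v)$ is exactly the indicator of having at least one incident edge, lets you conclude by linearity of expectation over the vertices, with the two incident fair coin flips giving $3/4$ at every vertex simultaneously. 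The same probabilistic kernel ($0$ with probability $1/4$, $1$ with probability $3/4$) underlies both arguments, but your version yields the exact value $\sum_H \dim(H) = 3\cdot 2^{m-2}$ rather than a divisibility argument (and is cleaner on that point, since the paper's count $2^{n(n-1)/2}$ there should really be $2^{|E|}$), and it generalizes verbatim to the later theorem on $G(p)$: replacing the fair coins by probability-$p$ coins gives $1-(1-p)^2 = p(2-p)$ at each vertex, which is exactly the paper's subsequent result. The paper's inductive scheme, on the other hand, is the template it reuses for the recursion on $G(n,p)$, which is presumably why it is phrased that way.
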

\begin{proof}
A one-dimensional graph without boundary is a finite union of cyclic graphs. 
For two disjoint graphs $G_1,G_2$ with union $G=G_1 \cup G_2$, we have 
${\rm dim}(G) = ({\rm dim}(G_1) |G_1| + {\rm dim}(G_2) |G_2|)/|G|$. It is therefore enough to 
prove the statement for a connected circular graph of size $n \geq 4$. This can be done 
by induction. We can add a vertex in the middle of one of the edges to get from 
$C_n$ to $C_{n+1}$. The dimensions of the other vertices do not change. 
The new point has dimension $0$ with probability $1/4$ and $1$ with probability $3/4$.  
Since the smallest one dimensional graph has $4$ nodes, and $2^{\B{n}{2}}$ is already a multiple
of $4$, the sum dimensions of all subgraphs is an integers.
\end{proof}

{\bf Remarks.} \\
{\bf 1.} We will generalize this result below and show that for a one dimensional circular graph,
the expected dimension of a subgraph is $p(2-p)$. The $3/4$ result is the special case when $p=1/2$. 
The result is of course different for the triangle $K_3=C_3$, which is two dimensional and for which the 
expected dimension is $p(2-p+p^2)$. We will call the function $f(p) = p(2-p)$ the signature function. 
It is in this particular case the same for all one dimensional graphs without boundary. \\
{\bf 2.}  Is the sum of dimensions of subgraphs of a graph with integer dimension and constant degree
an integer? No. Already for an octahedron, a graph $G$ of dimension $2$ which has $2^{12}=4096$ different subgraphs, 
a brute force computation shows that the sum of dimensions is $a=15424/3$ and 
that the average dimension of a subgraph of $G$ is $a/2^{12}=1.25521$. 
The unit ball $B_1(v)$ in the octahedron is the wheel graph $W_4$ with four spikes
in which the sum of the dimensions is $284$ and the average dimension is $284/256=1.10938$. \\
{\bf 3.} We would like to know the sum of all dimensions of subgraphs for flat tori, finite graphs for which each 
unit disc is a wheel graph $W_6$. Such tori are determined by the lengths $M,K$ of the smallest homotopically
nontrivial one dimensional cycles as well as a "Dehn" twist parameter for the identification. 
Already in the smallest case $M=K=4$, the graph has $48$ edges and summing 
up over all $2^{\B{48}{2}}=3.6 \cdot 10^{339}$ possible subgraphs is impossible. It becomes 
a percolation problem \cite{Grimmet}. It would be interesting to know for example
whether the dimension signature functions $f_{K,M}(p) = {\rm E}_p[{\rm dim}]$ 
have a continuous limit for ${\rm min}(K,M) \to \infty$. \\ 
{\bf 4.} For the wheel graph $W_6$, the unit disc in the flat torus, 
the average dimension is $(159368/35)/2^{12} = 1.1116..$. 
For a flat torus like $T_{5,5}$ we can not get the average dimension exactly but measure it to be about $1.3$. 
We observe in general however that the dimension depends smoothly on $p$. \\
{\bf 5.} Brute force computations are not in vain because they allow also to look at the distribution 
of the dimensions of graphs with $n$ vertices. Since we do not yet have 
recursive formulas for the higher moments, it is not clear how this behaves in the limit
$n \to \infty$. 

\begin{figure}
\parbox{15cm}{
\parbox{7.3cm}{ \scalebox{0.22}{\includegraphics{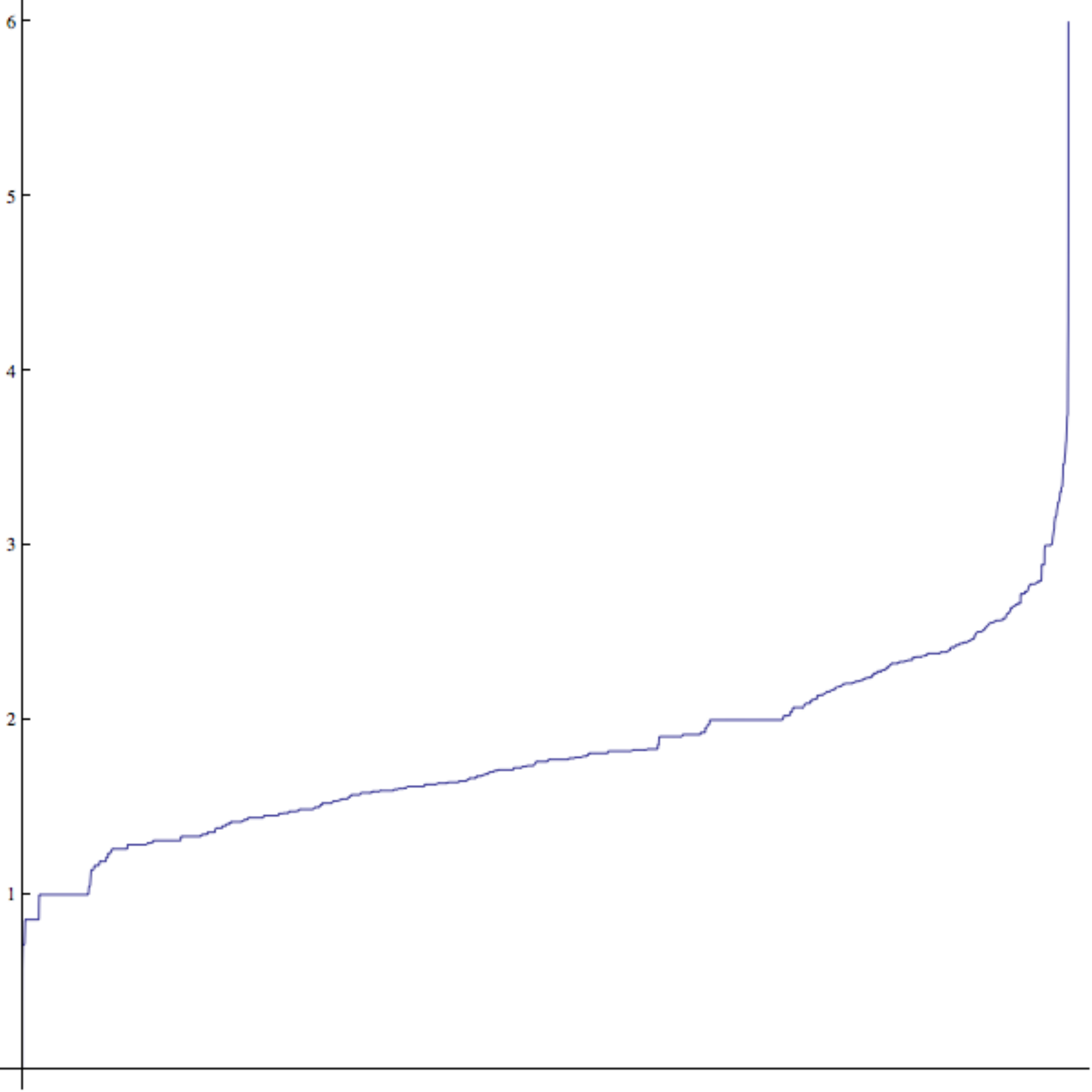}} }
\parbox{7.3cm}{ \scalebox{0.22}{\includegraphics{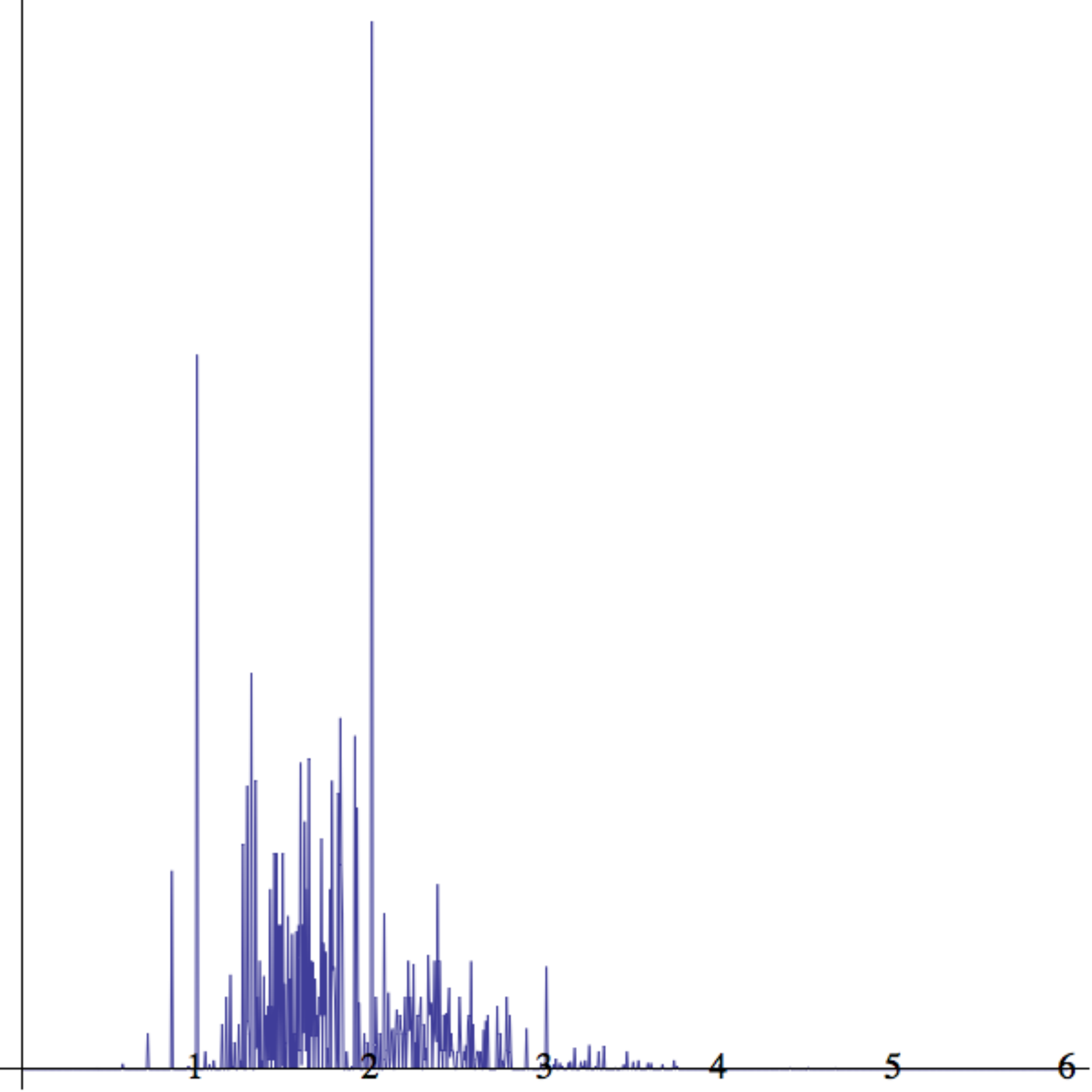}} }
}
\caption{
The distribution of the random variable ${\rm dim}$ on the finite probability 
space $G(7,1/2)$ of all graphs with vertex cardinality $7$. The left picture shows the cumulative 
distribution function, the right the density function. Graphs with integer dimension 
appear to have higher probability than graphs with fractional dimension. This is a phenomenon,
we also see in concrete networks, like social graphs or computer networks. 
The function ${\rm dim}$ takes 245 different dimension values on $G(7,1/2)$. The dimension 2 appears for
$4'146'583$  graphs, the next frequent dimension is $1$ appears in $99'900$ graphs.
Only in third rank is dimension $55/42$ is a fraction. It appears for $55'440$ graphs. } 
\label{graph7}
\end{figure}

\begin{figure}
\parbox{15cm}{
\parbox{6.5cm}{ \scalebox{0.25}{\includegraphics{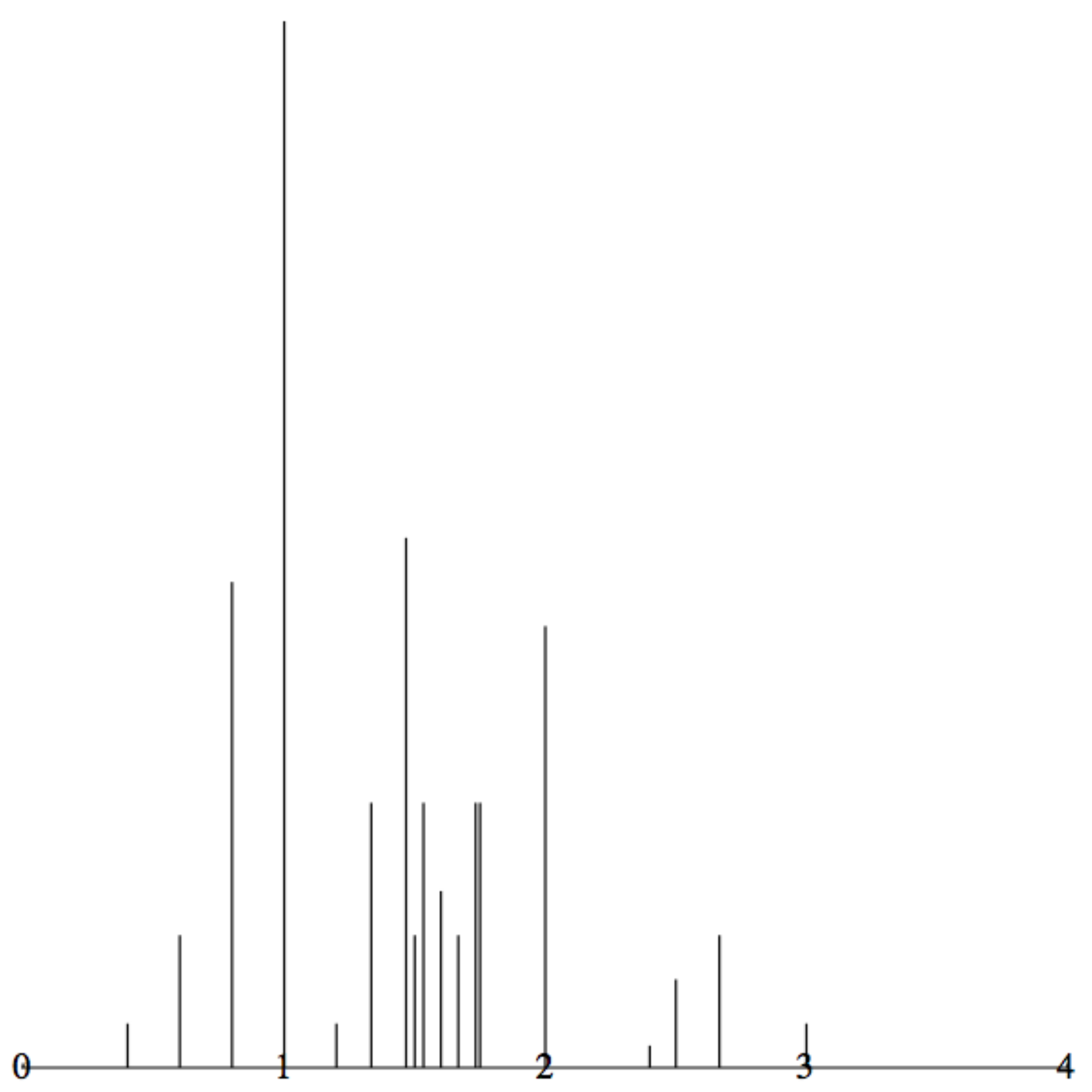}} }
\parbox{6.5cm}{ \scalebox{0.25}{\includegraphics{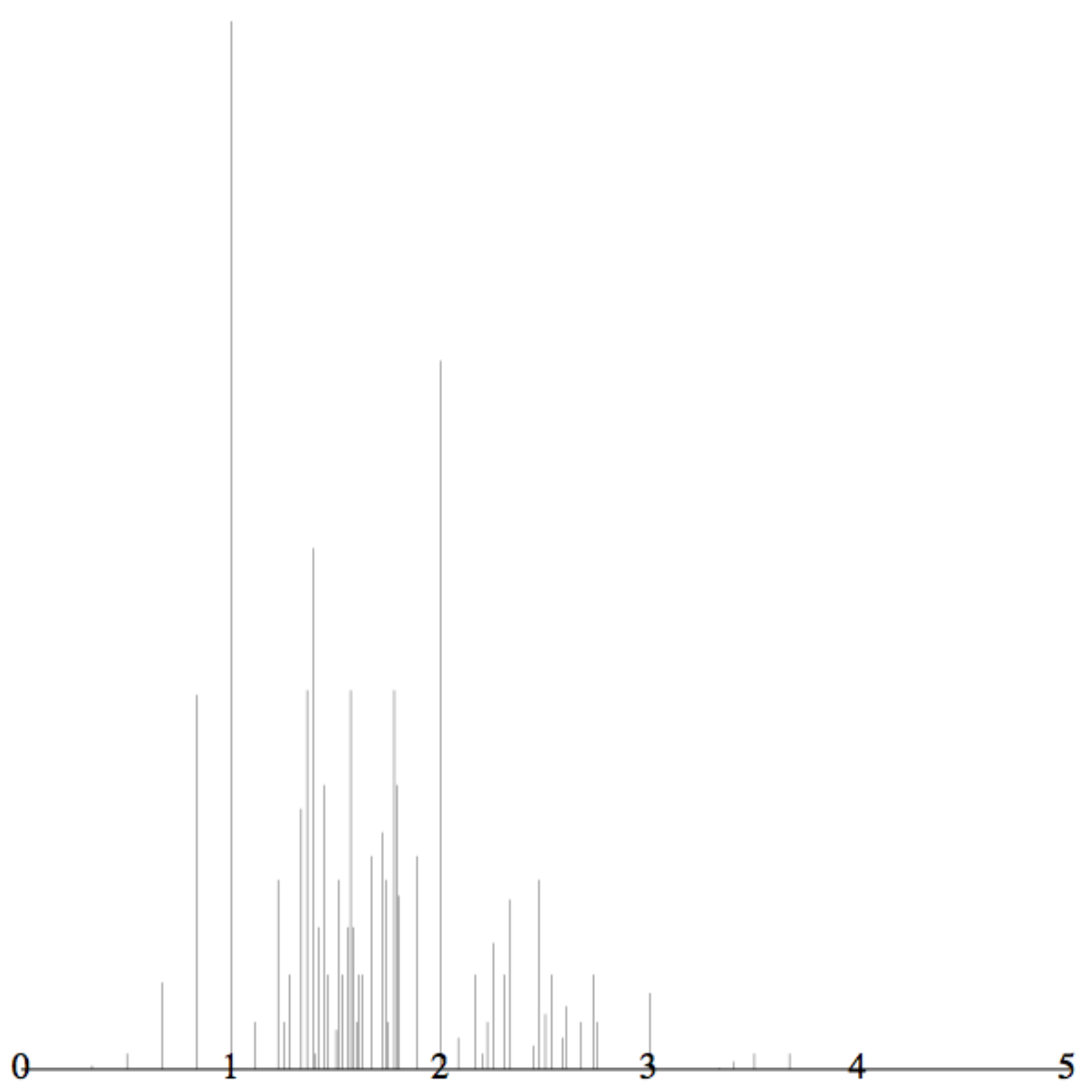}} }
}
\caption{
The dimension statistics on $G(5,1/2)$ and $G(6,1/2)$ already indicates a preference
for integer dimension of subgraphs. }
\end{figure}

The first sums of dimensions are
$s_1=0$,
$s_2=1$,
$s_3=7$,
$s_4=75$, 
$s_5=1451$,
$s_6=53275$,
$s_7=3791451$,
$s_8=528662939$,
$s_9=145314459035$,
$s_{10}=79040486514843$, and $s_{11} = 85289166797880475$.  On a graph with $5$ vertices for example, 
the sum of all dimensions over all the $2^{10}$ subgraphs is $1451$. 
We could not possibly compute $s_{11}$ by brute force, since there are just too many graphs.  \\

Favoring integer dimensions for concrete or random graphs is a resonance phenomena 
of number theoretical nature. Whether it is a case for "Guy's law of small numbers" disappearing in
the limit $n \to \infty$ remains to be seen. 

\section{The dimension of a random p-percolating graph} 

We generalize now the probability measure on the space of all graphs with $n$ elements 
and switch each edge on with probability $0 \leq p \leq 1$. This is the classical Erd\"os-R\'enyi model
\cite{erdoesrenyi59}. With the probability measure $P_p$ on $X_n$, the probability space is called
$G(n,p)$. For this percolation problem on the complete graph, the mean degree
is ${\rm E}[{\rm deg}] = n p$.  \\

The following result is a generalization of 
Theorem~\ref{randomgraph}, in which we had $p=1/2$. It computes $d_n = {\rm E_{n,p}}[{\rm dim}]$.

\begin{theorem}[Average dimension on $G(n,p)$]
The expected dimension ${\rm E}_p[{\rm dim}]$ on $G(n,p)$ satisfies
$$ d_{n+1}(p) = 1+\sum_{k=0}^n \B{n}{k} p^k (1-p)^{n-k} d_k(p) \; , $$
where $d_0=-1$. Each $d_n$ is a polynomial in $p$ of degree $\B{n}{2}$. 
\label{randomgraph2}
\end{theorem}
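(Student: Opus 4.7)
The plan is to mimic the $p=1/2$ argument but recast it probabilistically, directly at the level of the expectation $d_{n+1}(p)$, so that the Bernoulli weights $\binom{n}{k}p^k(1-p)^{n-k}$ appear naturally from the distribution of the degree of a fixed vertex.

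The starting point is the defining identity $\mathrm{dim}(G) = 1 + \frac{1}{|V|}\sum_{v\in V}\mathrm{dim}(S(v))$. For a random graph $G \in G(n+1,p)$, take expectation and use symmetry of $G(n+1,p)$ under relabelings of vertices: for any fixed vertex $v$, $E_p[\mathrm{dim}(S(v))]$ does not depend on $v$, so
\begin{equation*}
 d_{n+1}(p) \;=\; 1 + E_p[\mathrm{dim}(S(v))].
\end{equation*}
The key probabilistic step is then to describe the law of $S(v)$. The degree $|S(v)|$ is $\mathrm{Binomial}(n,p)$, and conditionally on the set of neighbors of $v$ being some $k$-element subset $W$, the induced subgraph on $W$ is itself a sample from $G(k,p)$ (the edges among $W$ are independent of which vertices happen to be adjacent to $v$). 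Conditioning on the degree $k$ and applying the definition of $d_k(p)$ yields
\begin{equation*}
 E_p[\mathrm{dim}(S(v))] \;=\; \sum_{k=0}^{n} \binom{n}{k} p^k(1-p)^{n-k}\, d_k(p),
\end{equation*}
which combined with the previous display gives the claimed recursion. The seed $d_0 = -1$ matches $\mathrm{dim}(\emptyset)=-1$.

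For the degree statement, I would induct on $n$. The base case $d_0(p) = -1$ has degree $0 = \binom{0}{2}$. For the step, assume $d_k(p)$ is a polynomial in $p$ of degree $\binom{k}{2}$ for all $k \le n$. In the recursion the $k$-th summand has degree at most $k + (n-k) + \binom{k}{2} = n + \binom{k}{2}$, which is maximized at $k=n$, giving degree $n+\binom{n}{2} = \binom{n+1}{2}$. One then only needs that the leading coefficient is nonzero, which is clear because the $k=n$ term contributes $p^n \cdot d_n(p)$ and the leading coefficient of $d_n(p)$ (nonzero by induction) cannot be cancelled by any lower-$k$ term since those have strictly smaller total degree.

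I do not foresee a real obstacle; the one place to be careful is the conditional-distribution claim, namely that given which $k$ vertices form the neighborhood of $v$, the induced subgraph on those $k$ vertices is exactly $G(k,p)$. This relies on the independence of the edges in the Erd\H{o}s--R\'enyi model and on the fact that $\mathrm{dim}(H)$ depends only on the isomorphism class of $H$, so the argument goes through verbatim.
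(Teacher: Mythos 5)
Your argument is correct and is essentially the paper's own proof: the paper likewise conditions on the degree of a distinguished (newly added) vertex, uses that the unit sphere of a degree-$k$ vertex is distributed as $G(k,p)$ so its conditional expected dimension is $d_k(p)$, and that by vertex symmetry the expected local dimension of one vertex equals $d_{n+1}(p)$; your version only makes the exchangeability step explicit and avoids the paper's re-indexing algebra by including the $k=0$ term with $d_0=-1$ from the start. Your induction for the degree statement (which the paper asserts without proof) is also sound, up to the harmless degenerate case $d_1\equiv 0$: the leading-coefficient step should begin at $n\ge 2$ after checking $d_2(p)=p$ of degree $\B{2}{2}=1$ directly, since the term $p\,d_1(p)$ vanishes and the constant $1$ interferes only in these lowest cases.
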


\begin{proof}
The inductive derivation for $p=1/2$ generalizes:
add a $n+1$'th point $P$ and partition the number of graphs into sets $Y_k$, 
where $P$ connects to a $k$-dimensional graph within the old graph. The expected
dimension of the new point is then 
$$ d(n+1) = \sum_{k=1}^n \B{n}{k} p^k (1-p)^{n-k} (d(k)+1)  \;  $$
and this is also the expected dimension of the entire graph. This can be written as 
$$ d(n+1) = \sum_{k=1}^n \B{n}{k} p^k (1-p)^{n-k} d(k) + \sum_{k=1}^n \B{n}{k} p^k (1-p)^{n-k} 1  \;  $$
which is 
$$ d(n+1) = \sum_{k=0}^n \B{n}{k} p^k (1-p)^{n-k} d(k) +1 - (1-p)^n d(0) - (1-p)^n $$
which is equivalent to the statement. 
\end{proof}

Again, if we think of the vector $d=(d_0,d_1, \dots ,d_n)$ 
as a random variable on the finite set $\{0,1, \dots ,n \; \}$ then $d_{n+1}$ is 
$1$ plus the expectation of this random variable with respect to the Bernoulli distribution
on $\{0,1, \dots ,n \; \}$ with parameters $n$ and $p$.  \\

Lets look at the first few steps:
we start with $-1$, where the expectation is $-1$ and add $1$ to get $0$. Now we have 
$(-1,0)$ and compute the expectation of this 
$(-1) p^0 (1-p)^1 + 0 p^1 (1-p)^1=p-1$. Adding $1$ gives $p$
so that we have the probabilities $(-1,0,p)$. Now compute the expectation again with
$(-1) p^0 (1-p)^2 + 2 p^1 (1-p)^1 \cdot 0 + p^2 (1-p)^0 p$. Adding $1$ gives 
the expected dimension of $d_3(p) = 2p - p^2 + p^3$ on a graph with 3 vertices. 
We have now the vector $(-1,0,p,2p-p^2+p^3)$. To compute the dimension expectation 
on a graph with $4$ vertices, we compute the expectation 
$(-1) p^0 (1-p)^3 + 3 p^1 (1-p)^2 \cdot 0 + 3 p^2 (1-p)^1 \cdot p + 1 p^3 (1-p)^0 \cdot (2p-p^2+p^3) 
  = -1 + 3 \cdot p - 3 \cdot p^2 + 4 \cdot p^3 - p^4 - p^5 + p^6$
and add 1 to get the expected dimension 
$d_4(p) = 3 \cdot p - 3 \cdot p^2 + 4 \cdot p^3 - p^4 - p^5 + p^6$ on a graph of $4$ elements. 
Here are the first polynomials:

\begin{center}
\begin{tabular}{l}
$d_2(p)=p$\\ 
$d_3(p)=2p-p^2+p^3$\\
$d_4(p)=3p-3p^2+4p^3-p^4-p^5+p^6$\\
$d_5(p)=4p-6p^2+10p^3-5p^4-3p^5+5p^6-p^8-p^9+p^{10}$\\
\end{tabular}
\end{center}

\begin{figure}
\parbox{6.0cm}{ \scalebox{0.22}{\includegraphics{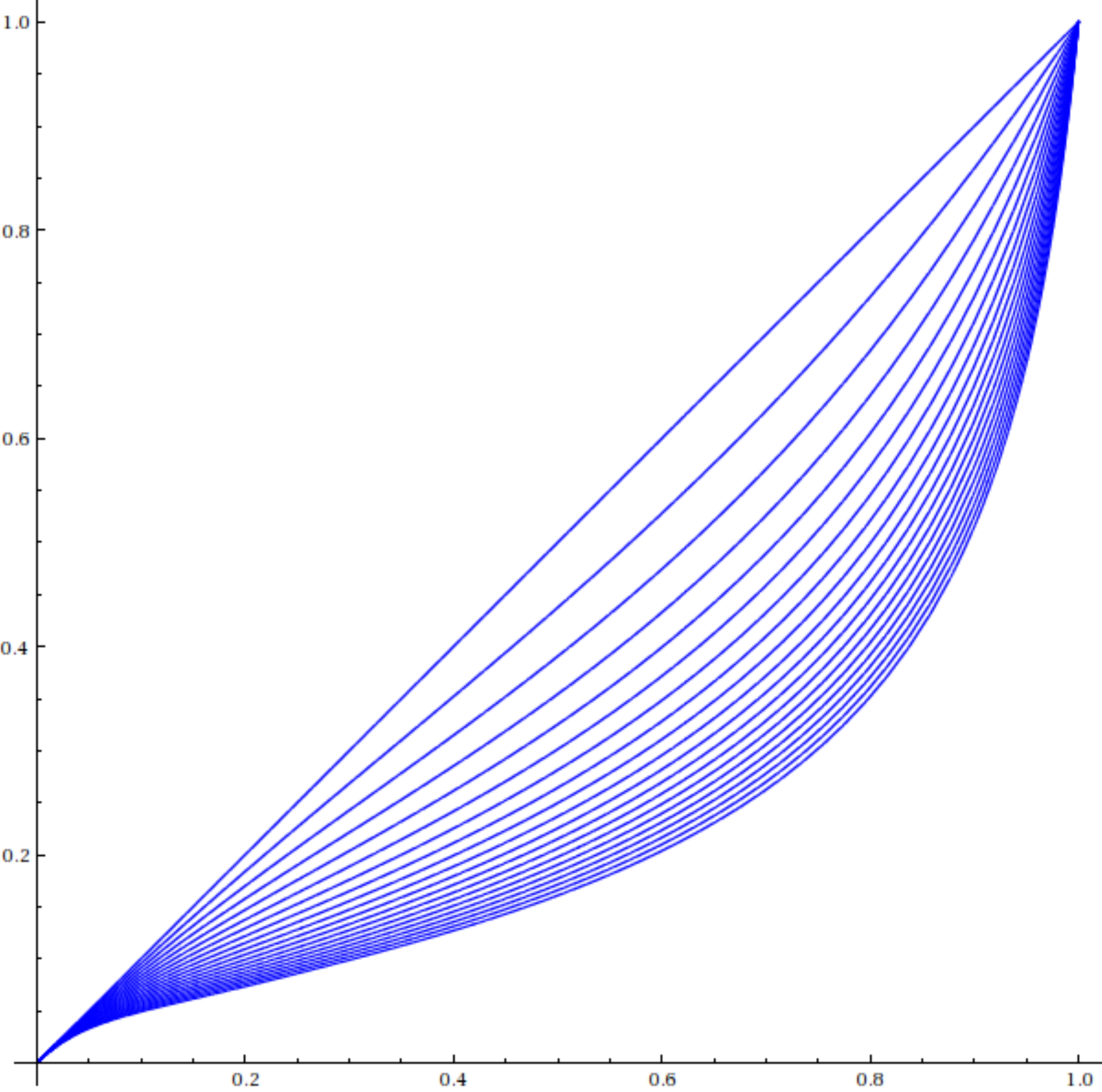}} }
\caption{
The expected dimension of a graph as a function of $p$ is monotone and is shown here for $n=1, \dots , 20$. 
We see the graphs of the functions $d_n(p)/(n-1)$ for $n=1,2, \dots 20$. A division by $(n-1)$ produces
a scaling so that $d_n(1) = 1$.
We experimentally see that they satisfy a power law: there are functions constants $a=a(p)$ and
$c=c(p)$ for which with the scaling law $d_n(p) \sim c/n^a$ holds in the limit $n \to \infty$.
Especially $d_n(p)/(n-1)$ is monotone in $n$ for every $p$. This needs to be explored more.}
\label{pddependence}
\end{figure}

\begin{figure}
\parbox{4.0cm}{ \scalebox{0.22}{\includegraphics{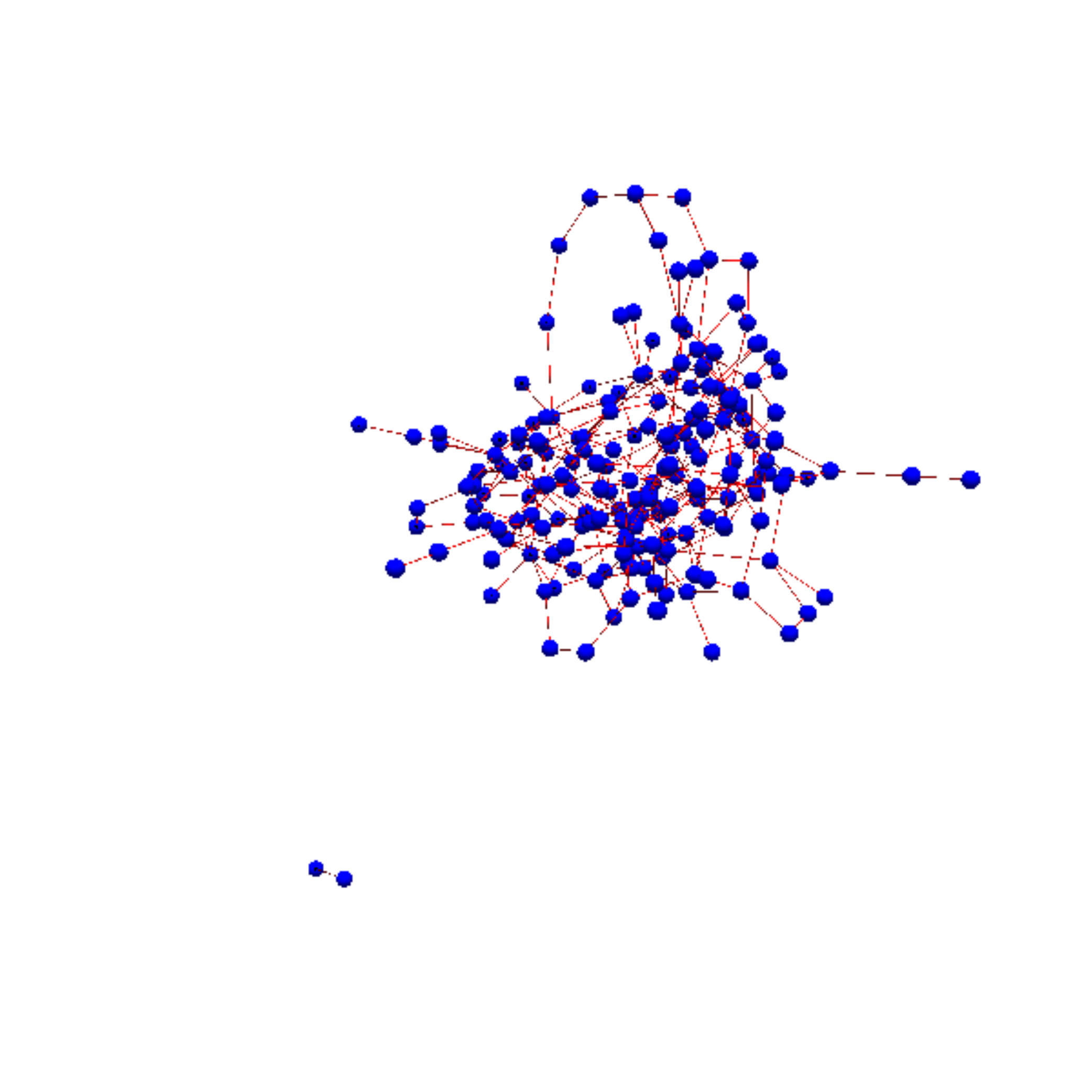}} }
\parbox{4.0cm}{ \scalebox{0.22}{\includegraphics{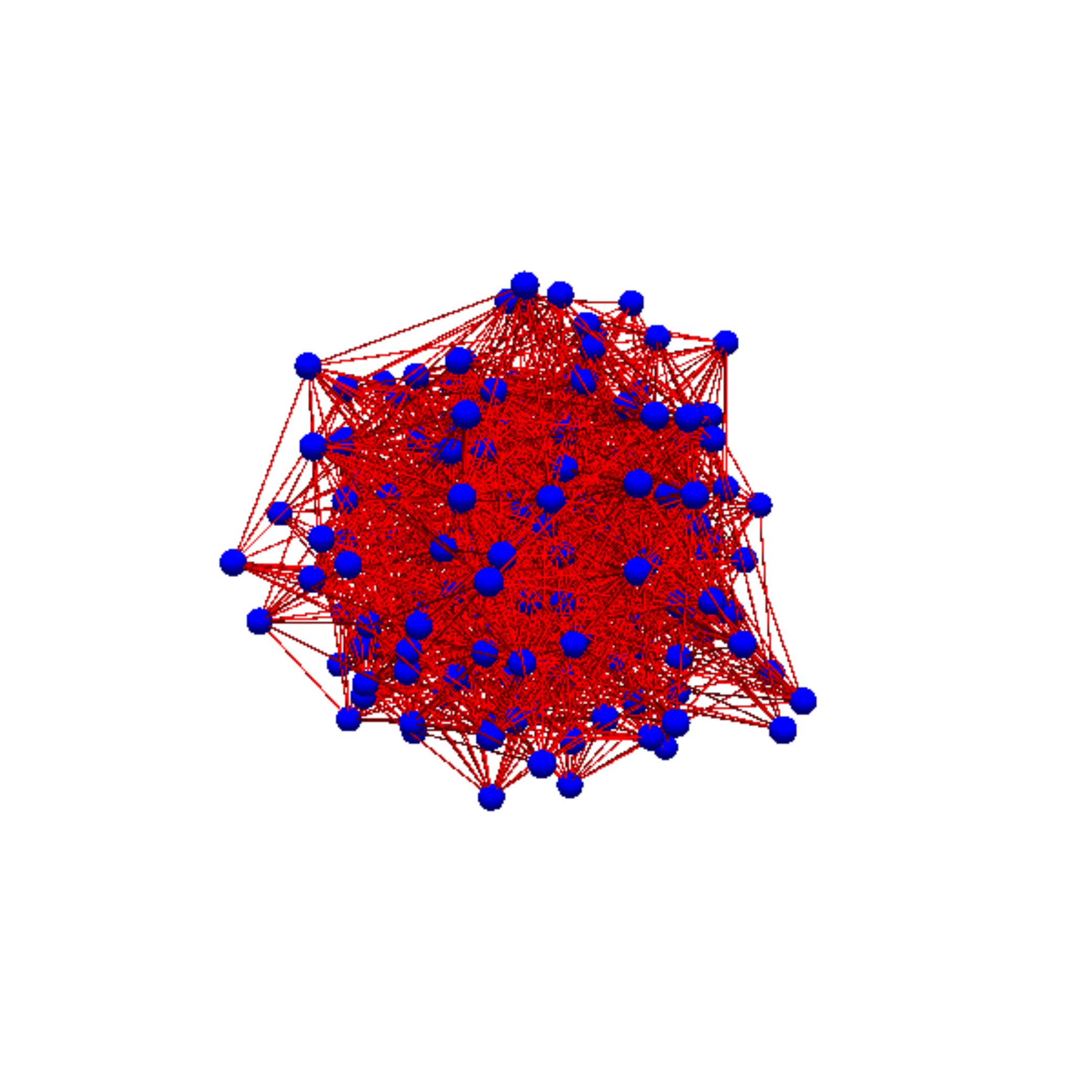}} }
\parbox{4.0cm}{ \scalebox{0.22}{\includegraphics{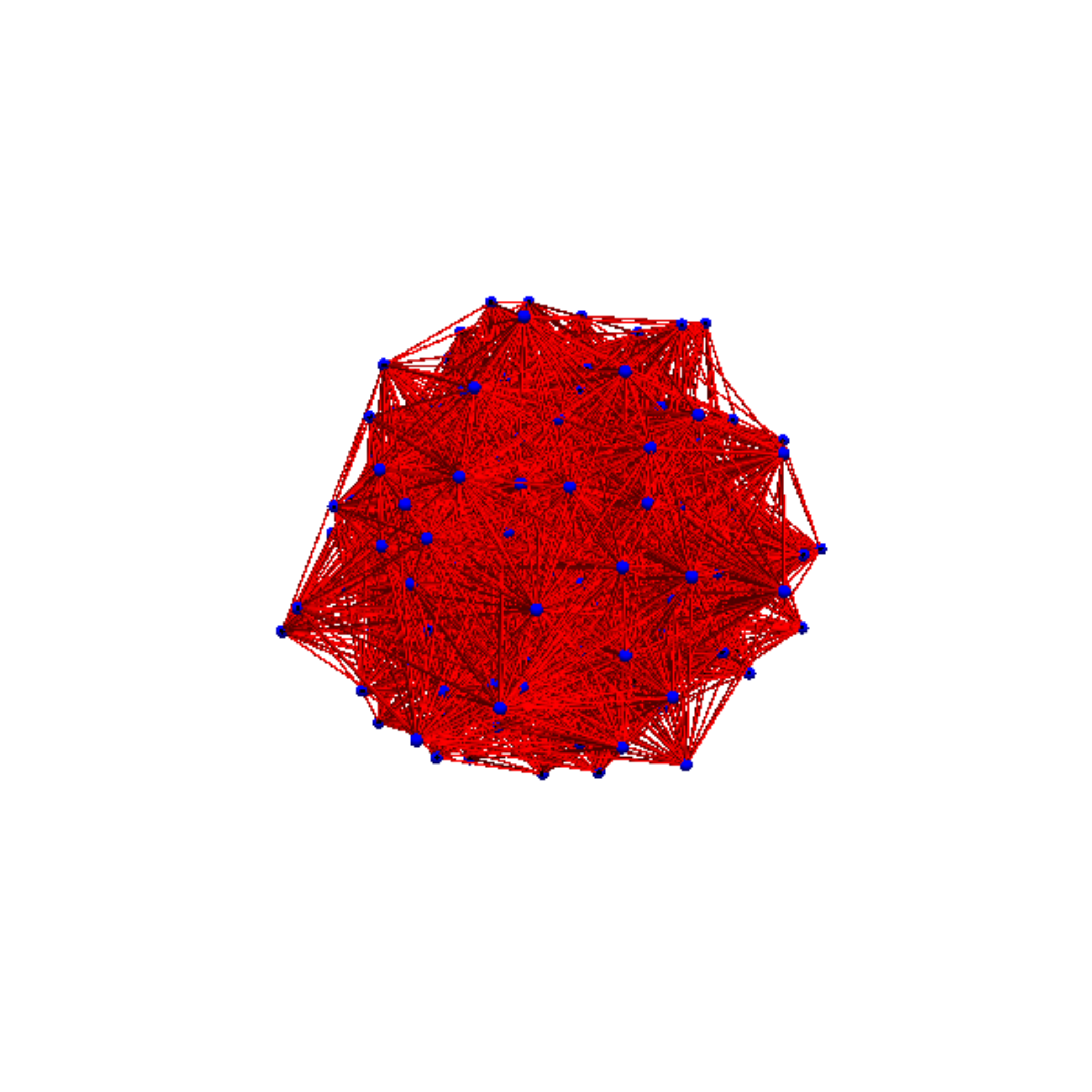}} }
\caption{
Three random graphs in which the percolation probability $p$ parameter is
chosen to have an expected dimension $1$, or $2$ or $3$. 
}
\label{pddependence2}
\end{figure}

The expected number of cliques $K_k$ of the complete graph $K_n$ is $\B{n}{k} p^{\B{k}{2}}$.
By approximating the Binomial coefficients $\B{n}{k}$ with the Stirling formulas, 
Erd\"os-R\'enyi have shown (see Corollary 4 in \cite{erdoesrenyi61}), that
for $p < n^{-2/(k-1)}$, there are no $K_k$ subgraphs in the limit $n \to \infty$
which of course implies that then the dimension is $\leq (k-1)$ almost surely.
For $p<n^{-2/2}$, there are no triangles in the limit and ${\rm dim}(G) \leq 1$
for $p<n^{-2/3}$, there are no tetrahedra in the limit and ${\rm dim}(G) \leq 2$.  \\
 
Is there a threshold, so that for $p<n^{\alpha}$ the expectation of dimension converges? 
We see experimentally for any $p$ that ${\rm E}[{\rm dim}] \sim c(p)/n^{a(p)}$, where $c,a$ 
depend on $p$.  \\

\begin{figure}
\parbox{6.0cm}{ \scalebox{0.22}{\includegraphics{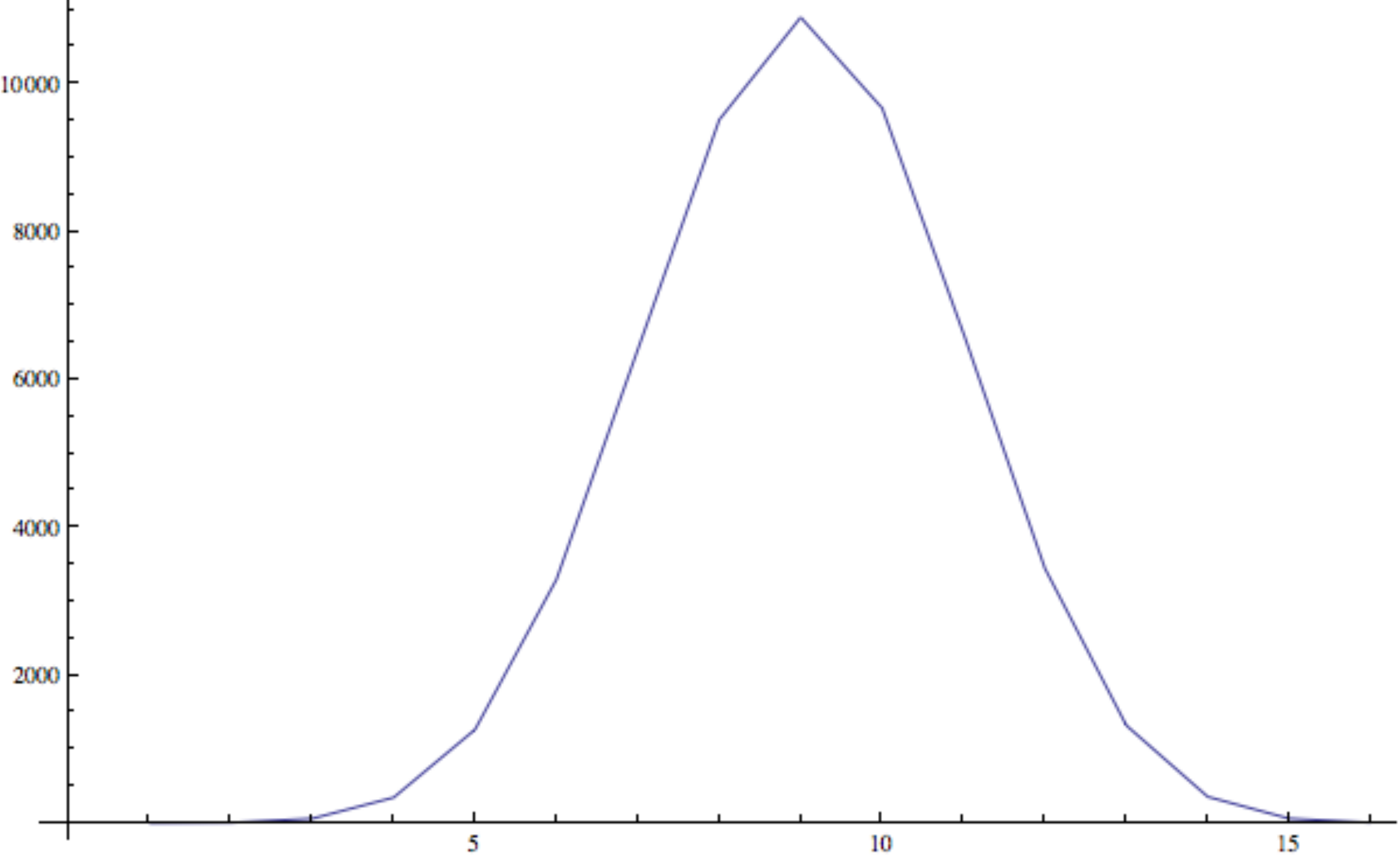}} }
\caption{
The sum $s_n(k)$ of dimensions of size-$k$ and order-$n$ subgraphs of a 
complete graph $K_n$ is an integer too.  
The sequence $s_n(k)$ appears to converge to a Gaussian distribution. 
This is not surprising, when considering the de Moivre-Laplace 
limit theorem and that dimension is highly correlated with the 
size of the graph. We see in experiments, that this distribution 
limit is achieved also for subgraphs of other graphs like circular graphs. 
}
\label{degreedimension}
\end{figure}

Finally lets look at a generalization of the $3/4$ theorem:

\begin{theorem}[Expected dimension of a subgraph of a one dimensional graph]
The expected dimension ${\rm E}_p[{\rm dim}]$ on all subgraphs of any one-dimensional graph
without boundary is $p(2-p)$. 
\end{theorem}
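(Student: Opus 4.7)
The plan is to reduce the claim to a single cyclic graph $C_n$ with $n \ge 4$, and then compute the expected local dimension vertex by vertex using linearity of expectation.

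First I would observe, as in the proof of the $3/4$ theorem, that a one-dimensional graph $G$ without boundary is a disjoint union of cyclic graphs $C_{n_1}, \dots, C_{n_r}$ with each $n_i \ge 4$. The identity $\dim(G) = \sum_i |C_{n_i}|\,\dim(C_{n_i})/|V|$ is preserved under percolation in each component independently, so taking expectations and using linearity shows it suffices to verify $\mathrm{E}_p[\dim] = p(2-p)$ on a single $C_n$, $n \ge 4$.

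Next I would use $\dim(H) = \frac{1}{n}\sum_{v} \dim_H(v)$ with $\dim_H(v) = 1 + \dim(S_H(v))$, and compute $\mathrm{E}_p[\dim_H(v)]$ at a fixed vertex $v$. The two neighbors $u_1, u_2$ of $v$ in $C_n$ are non-adjacent precisely because $n \ge 4$, so in every percolated subgraph $H$ the unit sphere $S_H(v)$ is a totally disconnected graph on $0$, $1$, or $2$ vertices, with respective probabilities $(1-p)^2$, $2p(1-p)$, $p^2$. Hence $\dim(S_H(v))$ equals $-1$ when empty and $0$ otherwise, giving
\begin{equation*}
 \mathrm{E}_p[\dim_H(v)] = 0\cdot(1-p)^2 + 1\cdot 2p(1-p) + 1\cdot p^2 = p(2-p).
\end{equation*}
Averaging over the $n$ vertices and applying linearity of expectation yields the desired formula.

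There is no real obstacle: the only subtlety is the hypothesis $n \ge 4$, which guarantees the sphere has no internal edges and therefore cannot contribute a positive-dimensional term. For $n=3$ the neighbors $u_1,u_2$ are adjacent, and the edge between them survives with probability $p$, producing the additional $p^3$ correction and yielding $p(2-p+p^2)$ for $K_3=C_3$, consistent with the remark preceding the theorem.
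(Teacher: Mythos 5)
Your proof is correct, and it reorganizes the argument compared with the paper. The paper also reduces to cyclic graphs, but then proceeds by induction on the length of the cycle: a vertex is inserted in the middle of an edge, it is claimed that the expected dimensions of the old vertices are unchanged, and the new vertex is given expected local dimension $1\cdot p^2+2p(1-p)+0\cdot(1-p)^2=p(2-p)$. Your version replaces that induction by a single application of linearity of expectation: since every vertex of a cycle $C_n$ with $n\geq 4$ has exactly two mutually non-adjacent neighbours, the percolated sphere $S_H(v)$ is always edgeless, so ${\rm dim}_H(v)\in\{0,1\}$ and ${\rm E}_p[{\rm dim}_H(v)]=p(2-p)$ at every vertex simultaneously; averaging finishes the proof. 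The key probabilistic computation is the same in both arguments, but your route is cleaner and more complete: it needs no base case, it makes explicit the point the paper leaves implicit (why the old vertices' expected local dimensions are what they are, namely the same non-adjacency of neighbours), and in fact even your reduction to a single cycle is dispensable, since the per-vertex computation applies verbatim to every vertex of the disjoint union and linearity does not require independence across components. Your closing remark about $C_3=K_3$ correctly isolates where the hypothesis $n\geq 4$ enters and matches the paper's value $p(2-p+p^2)$.
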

\begin{proof}
Proceed by induction. Add an other vertex in the middle of a given edge. 
The expectation of the dimension of the remaining points does not change. 
The expectation of the dimension of the new point is 
$1 \cdot p^2 + 2 p (1-p) + 0 (1-p)^2 = p(2-p)$ because the dimension of 
the point is one if both one of the two adjacent edges are present and $0$ 
if none is present. 
\end{proof}

{\bf Remark.}
More interesting is to introduce the variable $q=p=1$ and write  the expectation as a sum 
$$  \sum_{k=0}^n a_{k,n} \B{n}{k} q^{n-k} p^k \; , $$
where $a_{k,n}$ is the expectation of the dimension on the probability space $G(p)$ of 
all graphs $(V,E_k)$, where $E_k$ runs over all subsets of the vertices of the host graph $G$. 
For example, for the host graph $C_8$ we have
$$ {\rm E}[{\rm dim}] = 2 q^7 p  + 13 q^6 p^2 + 36 q^5 p^3 + 55 q^3 p^4 
  + 50 q^3 p^5 + 27 q^2 p^6 + 8 q^1 p^7 + p^8 = p(2-p)  \;  $$
from which we can deduce for example that $27/28$ is the expected dimension if two links
are missing in a circular graph $C_8$ or $13/28$ if two links are present and 
that $2/8$ is the expected dimension if only one link is present. The polynomial has the form
$$ 2 p q^{n-1} + (2n-3) q^{n-2} p^2 + n^2 q^{n-3} p^3 + \cdots + n q p^{n-1} + p^n \; . $$

\section{The Euler Characteristic of a random graph}

Besides the degree average ${\rm deg}$ and dimension ${\rm dim}$, 
an other natural random variable on the probability space $G(n,p)$ 
is the Euler characteristic $\chi$ defined in Equation~(\ref{eulercharacteristicdef}). \\

For $p=1/2$, we sum up the Euler characteristics over all  order $n$
sub graphs of the complete graph of order $n$ and then average. The list of sums starts with 
$$   1, 3,  13,    95,      1201, 25279,... $$ 
leading to expected Euler characteristic values  
$$   1, 1.5, 1.625, 1.48438, 1.17285, 0.771454, 0.34842, -0.0399132 \; . $$
Already for $n=5$, there are some subgraphs with negative Euler characteristics. 
For $n=8$, the expectation of $\chi$ has become negative for the first time. It will do so again and again.
These first numbers show by no means any trend: the expectation value of $\chi$ will oscillate 
indefinitely between negative and positive regimes and grow in amplitude. This follows from
the following explicit formula:

\begin{theorem}[Expectation of Euler characteristic]
The expectation value of the Euler characteristic on $G(n,p)$ is 
$$ {\rm E}_{n,p}[\chi] = \sum_{k=1}^n (-1)^{k+1} \B{n}{k} p^{\B{k}{2}} \; . $$
\end{theorem}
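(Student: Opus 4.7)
The plan is to use linearity of expectation applied directly to the definition of the Euler characteristic as an alternating sum over clique counts. Write $\chi(G) = \sum_{k=0}^\infty (-1)^k v_k(G)$, where $v_k(G)$ counts the $K_{k+1}$-subgraphs of $G$. Since on a fixed vertex set of size $n$ there are no $K_{k+1}$ for $k+1 > n$, the sum is finite, and linearity gives
$$ {\rm E}_{n,p}[\chi] = \sum_{k=0}^{n-1} (-1)^k \, {\rm E}_{n,p}[v_k] \; . $$

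Next I would compute ${\rm E}_{n,p}[v_k]$. Write $v_k$ as a sum of indicator random variables $\mathbf{1}_S$, one for each $(k+1)$-element subset $S$ of the vertex set, where $\mathbf{1}_S = 1$ iff all edges among vertices in $S$ are present. In the Erd\H{o}s--R\'enyi model $G(n,p)$ the $\binom{k+1}{2}$ edges inside $S$ are independent, each present with probability $p$, so ${\rm E}[\mathbf{1}_S] = p^{\binom{k+1}{2}}$. Summing over the $\binom{n}{k+1}$ subsets yields
$$ {\rm E}_{n,p}[v_k] = \B{n}{k+1} \, p^{\B{k+1}{2}} \; . $$

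Substituting this into the previous display and reindexing with $j = k+1$ converts the sum into
$$ {\rm E}_{n,p}[\chi] = \sum_{j=1}^{n} (-1)^{j-1} \B{n}{j} p^{\B{j}{2}} \; , $$
which is the claimed formula since $(-1)^{j-1} = (-1)^{j+1}$.

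There is no real obstacle here; the only point deserving a comment is that we are on the fixed vertex set $V = \{1,\dots,n\}$ (so isolated vertices contribute to $v_0 = n$, consistent with the $j=1$ term $\binom{n}{1}p^0 = n$), and that the infinite sum in the definition of $\chi$ is truncated automatically because $v_k = 0$ for $k \ge n$. Everything else is just linearity of expectation together with the independence of edges in $G(n,p)$.
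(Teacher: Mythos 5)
Your proof is correct and follows essentially the same route as the paper: linearity of expectation applied to $\chi = \sum_k (-1)^k v_k$, with ${\rm E}[v_k] = \B{n}{k+1} p^{\B{k+1}{2}}$ obtained by summing indicators over vertex subsets, then reindexing. If anything, your derivation of ${\rm E}[v_k]$ via independent edges inside each $(k+1)$-set is stated more cleanly than the paper's sketch.
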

\begin{proof}
We only need expectation of the random variable $v_k$ on $G(n,p)$. But this is well known:
(see \cite{bollobas}). We have
$$  {\rm E}[v_k] = \B{n}{k+1} p^{\B{k+1}{2}}  .  $$  
This later formula is proven as follows: if $S_k$ is the set of 
all k-cliques $K_k$ in the graph. Now count the number $n(s)$ of times that a 
k-clique $s$ appears. Then ${\rm E}[v_k] 2^{\B{n}{2}} = \sum_{s \in S_k} n(s) = \B{n}{k} p^k$
because $n(s)$ is constant on $S_k$. 
We especially have ${\rm E}[v_0]=n,{\rm E}[v_1]=\B{n}{2} p$ is the expected value of the number of edges.
and ${\rm E}[v_2] = \B{n}{3} p^3$ is the expected number of triangles in the graph and
${\rm E}[v_3] = \B{n}{4} p^6$ is the expected number of tetrahedra $K_4$ in the graph. 
\end{proof} 

\begin{figure}
\parbox{6.0cm}{ \scalebox{0.35}{\includegraphics{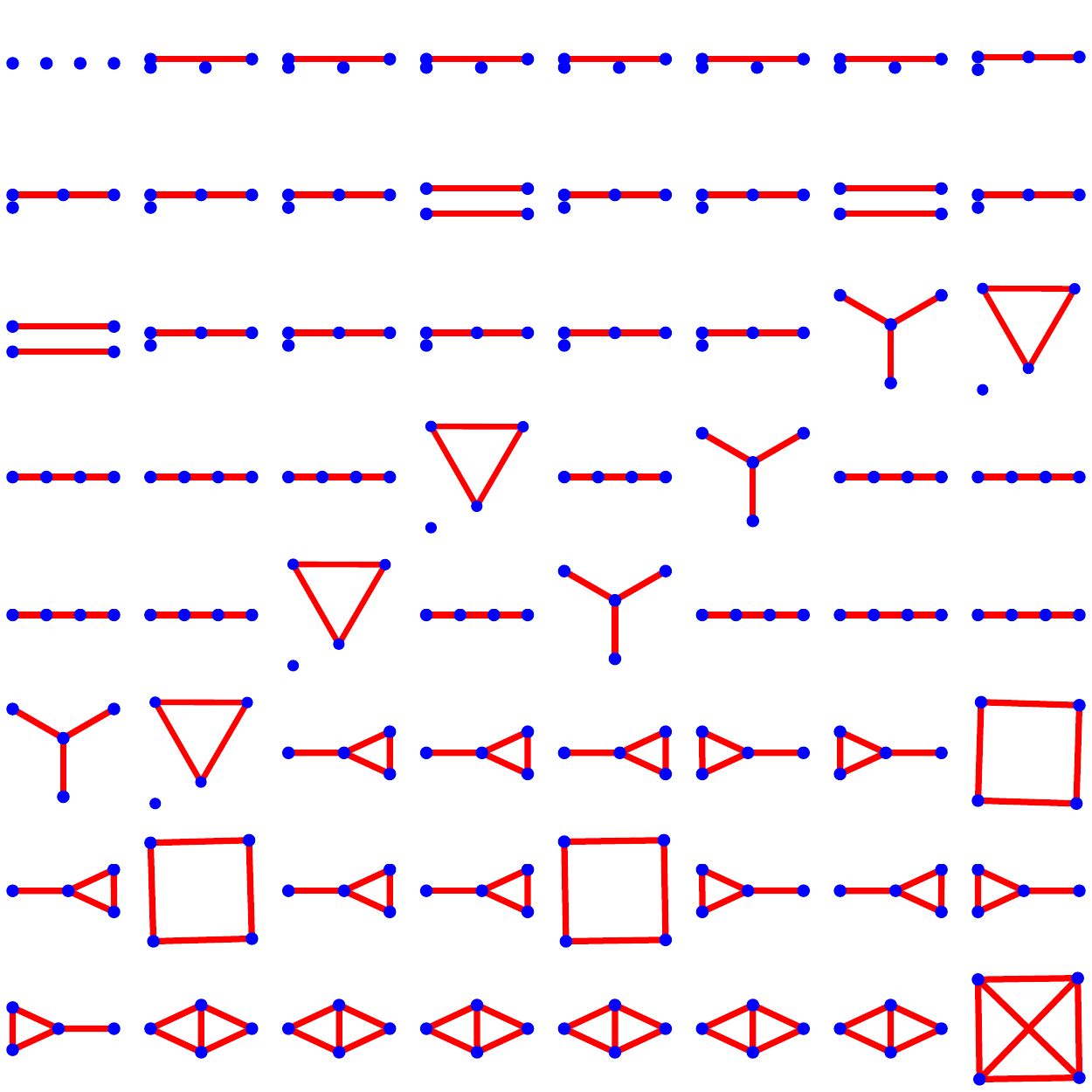}} }
\parbox{6.0cm}{ \scalebox{0.55}{\includegraphics{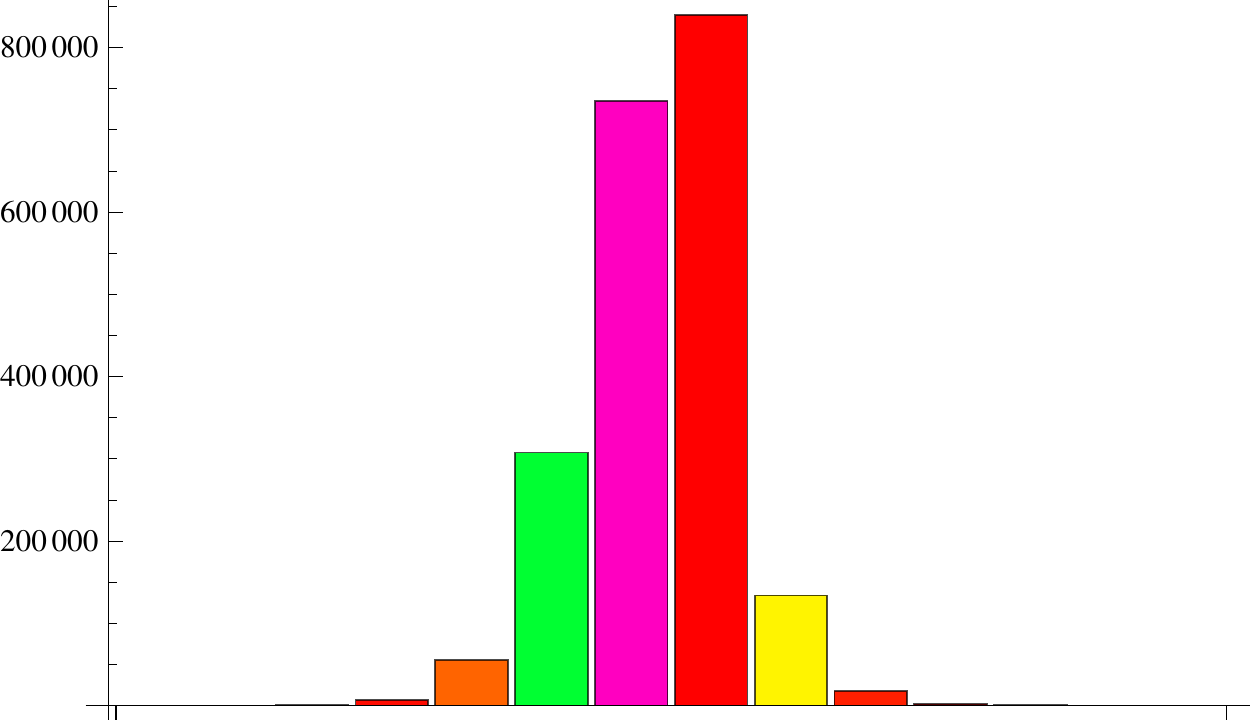}} }
\caption{
The total number of triangles $K_3$ in all subgraphs of $K_n$ is $2^{\B{n}{2}} \B{n}{3}/8$.
To the left, we see all the $2^{6}=64$ subgraphs of $K_4$. 
There are 32 triangles in total so that ${\rm E}[v_3]=1/2$. 
To the right we see the statistics of the Euler characteristic on 
the probability space $G(7,1/2)$. This is the last row of Table~\ref{eulertable}.
The most frequent case is $\chi=1$ followed by $\chi=0$. 
}
\label{eulercharacteristic}
\end{figure}

\begin{table}[h!]
\begin{tiny}
\begin{center}
\begin{tabular}{|llllllllllllll|} \hline
$\chi=$&$-5$ &$-4$ & $-3$ & $-2$ & $-1$ & $0$ & $1$& $2$ & $3$ & $4$ & $5$ & $6$ & $7$ \\ \hline \hline
n=1   &   &   &     0 &    0 &     0 &     0 &      1 &     0 &    0 &    0 &   0 &  0  & 0  \\ \hline
n=2   &   &   &     0 &    0 &     0 &     0 &      1 &     1 &    0 &    0 &   0 &  0  & 0  \\ \hline
n=3   &   &   &     0 &    0 &     0 &     0 &      4 &     3 &    1 &    0 &   0 &  0  & 0  \\ \hline
n=4   &   &   &     0 &    0 &     0 &     3 &     35 &    19 &    6 &    1 &   0 &  0  & 0  \\ \hline
n=5   &   &   &     0 &    0 &    10 &   162 &    571 &   215 &   55 &   10 &   1 &  0  & 0  \\ \hline 
n=6   &   &   &    10 &  105 &  1950 &  9315 &  16385 &  4082 &  780 &  125 &  15 &  1  & 0  \\ \hline
n=7   & 35&420&  6321 &54985 &307475 &734670 & 839910 &133693 &17206 & 2170 & 245 & 21  & 1  \\ \hline
\end{tabular}
\end{center}
\end{tiny}
\caption{The Euler characteristic statistics of 
subgraphs of $K_n$. We see that $\chi=1$ is the most frequent situation in all cases $n=1$ to $n=7$.}
\label{eulertable}
\end{table}

{\bf Remarks.} \\
{\bf 1.} The Euler characteristic expectation ${\rm E}_{n,p}[\chi]$ as a function of $n$ 
oscillates between different signs for $n \to \infty$ 
because for each fixed $k$, the function $n \to \B{n}{k}/2^{\B{n}{2}}$ dominates in some range
than is taken over by an other part of the sum.\\
{\bf 2.} We could look at values of $p_n$ for which the expectation value of the dimension gives 
${\rm E}_{n,p}[{\rm dim}]=2$ and then look at the limit of the Euler characteristic.  \\
{\bf 3.} The clustering coefficient which is $3 v_2/a_2$ where $a_2$ 
are the number of pairs of adjacent edges is a random variable studied already. It would
be interesting to see the relation of clustering coefficient with dimension. \\
{\bf 4.} The formula appears close to $\sum_{k=1}^n (-1)^k \B{n}{k} p^k$ which simplifies to $1-(1-p)^n$
and which is monotone in $p$. But changing the $p^k$ to $p^{\B{n}{k}}$ completely changes the function
because it becomes a Taylor series in $p$ which is sparse. We could take the sequence
${\rm E}_{n,p}[\chi]/\B{n}{n/2}$ to keep the functions bounded, but this converges to $0$ for $p<1$.
A natural question is whether for some $p(n)$, we can achieve that ${\rm E}_{n,p(n)}[\chi]$ converges
to a fixed prescribed Euler characteristic. Since for every $n$, we have ${\rm E}_{n,0}[\chi]=n$
and ${\rm E}_{n,0}[\chi] = 1$ and because of continuity with respect to $p$, we definitely can 
find such sequences for prescribed $\chi \geq 1$. \\
{\bf 5.} Despite correlation between the random variables $v_k$ and different expectation values,
its not impossible that the distribution of the random variable 
$X_n(G) = (\chi(G)-{\rm E}[\chi(G)])/\sigma(\chi(G))$ on $G(n,p)$ could converge weakly in the limit 
$n \to \infty$ if $\sigma(\chi)$ is the standard deviation. We would like therefore to find
moments of $\chi$ on $G(n,p)$. This looks doable since we can compute moments and correlations of 
the random variables $v_k$. The later are dependent:
the conditional expectation ${\rm E}[ v_3=1 | v_2=0 \; ]$ for example is
zero and so different from ${\rm E}[v_3=1] \cdot {\rm E}[v_2=0 \; ] >0$. \\
{\bf 6.} When measuring correlations and variance, we have no analytic formulas yet and for $n \geq 8$ we had 
to look at Monte Carlo experiments. Tests in cases where we have analytic knowledge and can compare the
analytic and experimental results indicate that for $n \leq 15$ and sample size $10'000$
the error is less than one percent. \\

\begin{figure}
\parbox{15cm}{
\parbox{7.0cm}{ \scalebox{0.22}{\includegraphics{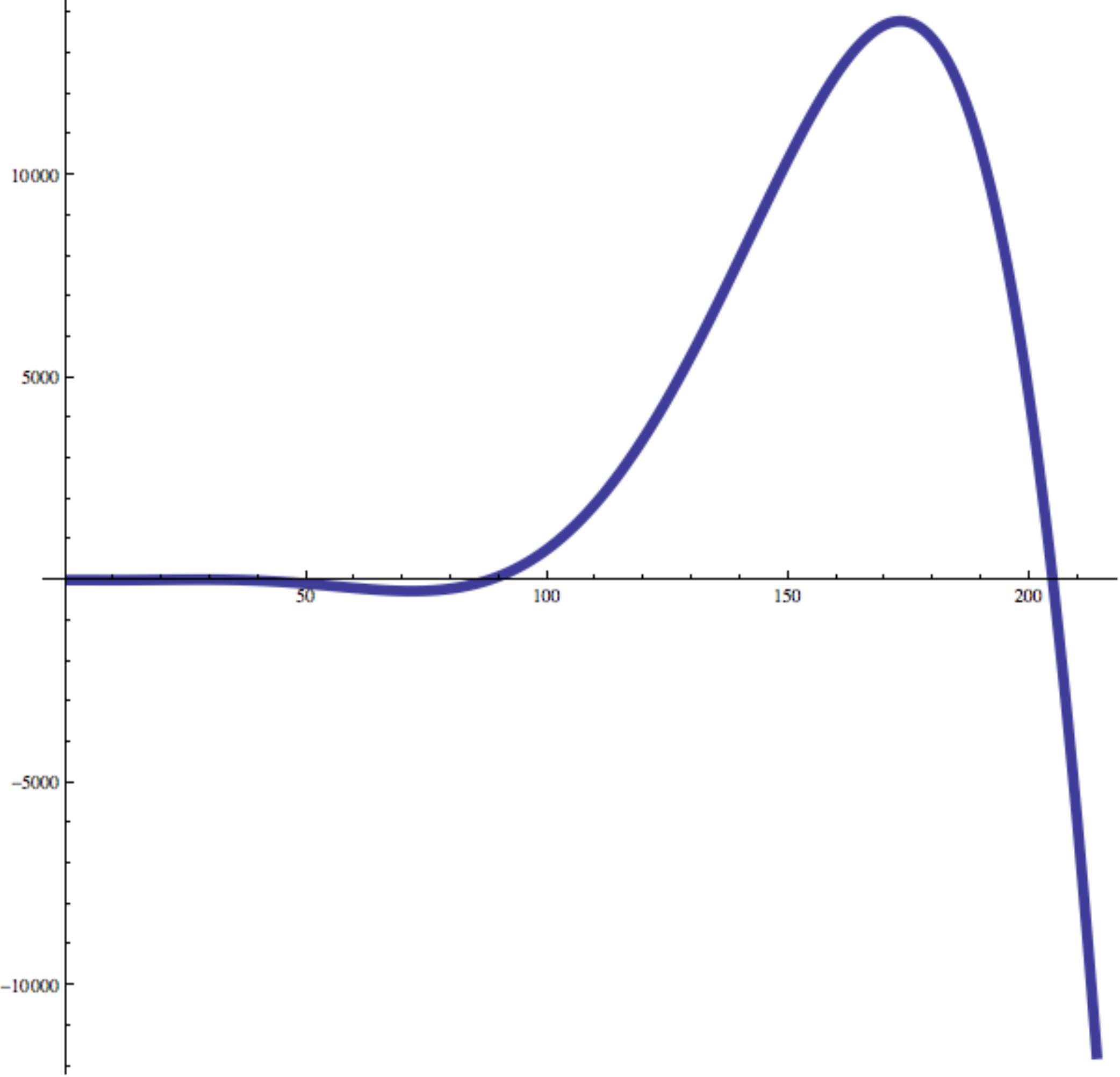}} }
\parbox{7.0cm}{ \scalebox{0.22}{\includegraphics{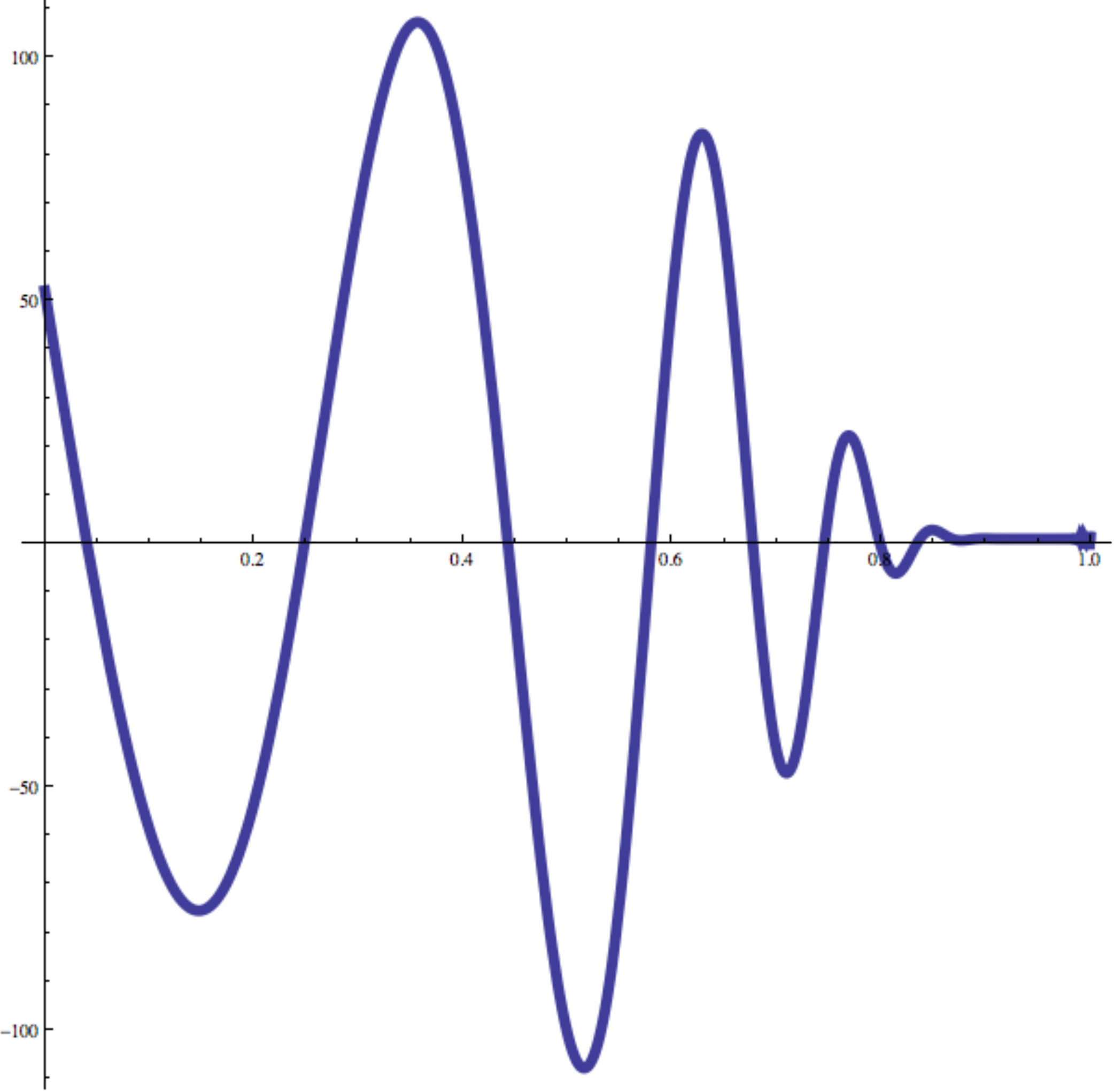}} }
}
\caption{
The average Euler characteristic as a function of $n$ oscillates. The intervals, where the behavior 
is monotone gets larger and larger however, the amplitudes grow too. Also the $p$ dependence is
interesting. The right picture shows the average Euler characteristic as a function of $p$ 
for a fixed order $n=52$. The function is $n-1$ at $p=0$ and $1$ at $p=1$. 
}
\label{eulerchar}
\end{figure}

\begin{table}[h!]
\begin{center}
\begin{tabular}{l} \hline
$d_1(p)=1$\\
$d_2(p)=2-p$\\
$d_3(p)=3 - 3p + p^3$\\
$d_4(p)=4 - 6p + 4p^3 - p^6$\\
$d_5(p)=5 - 10p + 10p^3 - 5p^6 + p^{10}$\\ \hline
\end{tabular}
\end{center}
\caption{Polynomials $d_n(p)$ which give the average Euler characteristic 
${\rm E}_p[\chi]$ on the probability space $G(n,p)$.}
\end{table}

\section{Statistical signatures}

For any graph $G=(V,E)$ called "host graph" we can look at the dimension and Euler characteristic signature functions 
$$   f(p) = {\rm E}_p[ {\rm dim}(H) \; ],  \hspace{1cm}  g(p) = {\rm E}_p[ \chi(H) \; ]  \;  $$
which give the expected dimension and Euler characteristic on the probability space $G(p)$ of all subgraphs of $G$
if every edge is turned on with probability $p$. These are
polynomials in $p$. We have explicit recursive formulas in the case of $K_n$, in which case the 
coefficients of $f$ are integers. We can explore it numerically for others. 
The signature functions are certainly the same for isomorphic graphs but they do not characterize the graph yet.
Two one dimensional graphs which are unions of disjoint cyclic graphs, 
we can have identical signature functions $f,g$ if their vertex cardinalities agree. The union 
$C_5 \cup C_5$ and $C_4 \cup C_6$ for example have the same signature functions $f,g$. \\

Since the global signature functions are not enough to characterize a graph, 
we can also look at the local dimension and Euler characteristic signature functions
$$   h_v(p) = {\rm E}_p[1+{\rm dim}(H,S(v)], \hspace{1cm}  k_v(p) = {\rm E}_p[ K(H,v) \; ] \; ] \; ,$$
where $H$ runs over all subgraphs and $v$ over all vertices. Here $K(H,v)$ is the curvature
of the vertex $v$ in the graph $H$. Of course, by definition of dimension and by the Gauss-Bonnet-Chern 
theorem for graphs. 
$$  \frac{1}{|V|} \sum_{v \in V} h_v(p) =  f(p), \hspace{1cm}  \sum_{v \in V} k_v(p) = g(p)  \; . $$
Since for one-dimensional graphs without boundary, the curvature is zero everywhere, 
also these do not form enough invariants for graphs. \\

We could also look at the dimension and curvature correlation matrices 
$$  A(p) = {\rm Corr}[h_v(p),h_w(p)], \hspace{1cm}  B(p) = {\rm Corr}[k_v(p),k_w(p)]  \;  $$
which are $n \times n$ matrix-valued functions on $[0,1]$ if $|V|=n$ is the cardinality of
the vertex set. We have 
$$ \frac{1}{|V|} {\rm tr}(A(p)) = {\rm Var}_p[ {\rm dim} ],  
   \hspace{1cm}  {\rm tr}(B(p)) = {\rm Var}_p[ \chi \; ] \; . $$

\begin{figure}
\parbox{6.0cm}{ \scalebox{0.22}{\includegraphics{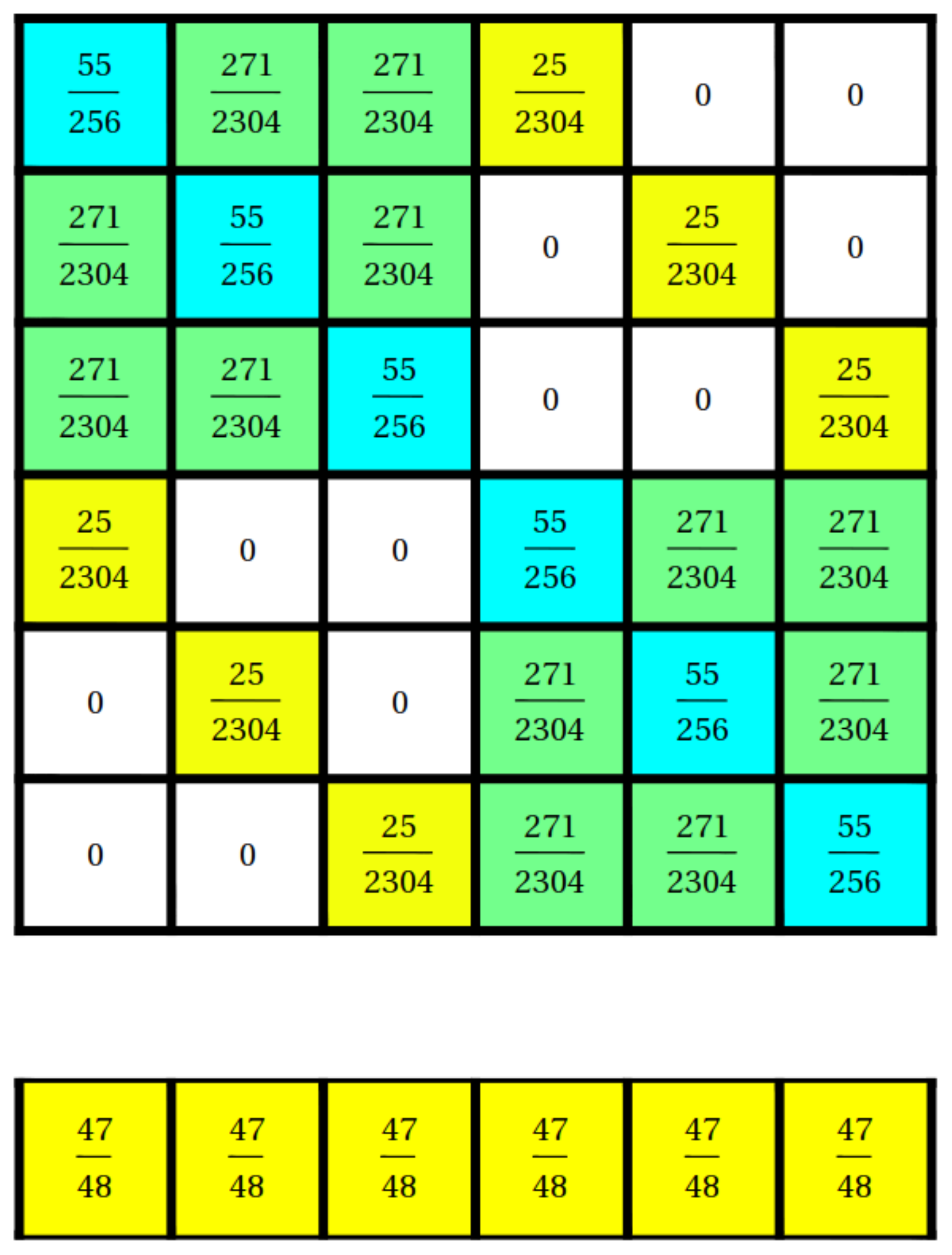}} }
\parbox{6.0cm}{ \scalebox{0.22}{\includegraphics{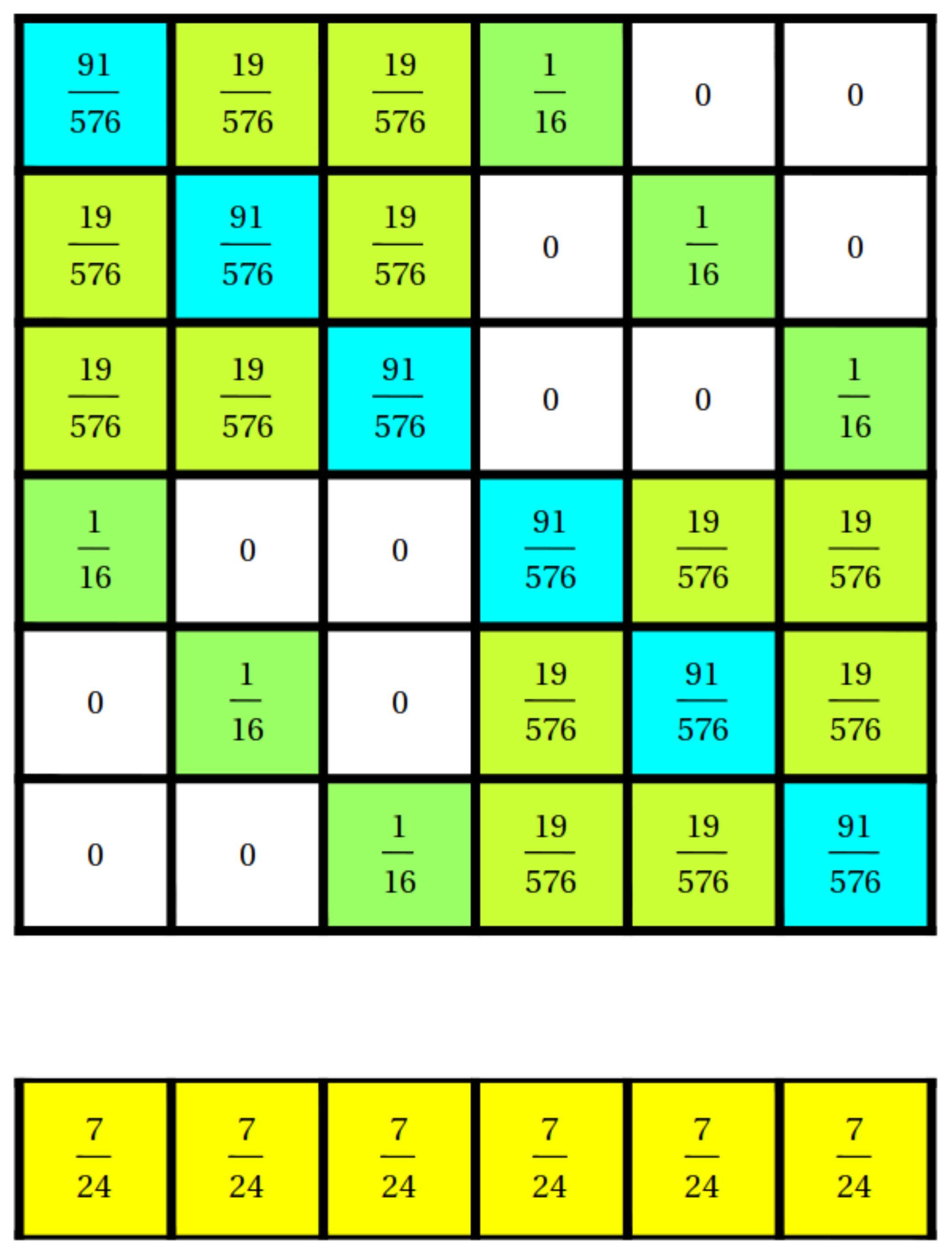}} }
\caption{
The dimension correlation matrix $A(1/2)$ and the 
curvature correlation matrix $B(1/2)$ for the Petersen graph $G=P(3,2)$ is shown.
Below the dimension correlation matrix, we see the average local dimension.
Below the curvature correlation matrix, we see the average curvature. 
The dimensions and correlations were computed in the probability space $G(p)$ of all subgraphs of $G$ 
which have the vertex set of $G$. The curvatures and dimensions at vertices which are not 
connected are uncorrelated. 
}
\label{petersenturan}
\end{figure}

\begin{figure}
\parbox{7.0cm}{ \scalebox{0.35}{\includegraphics{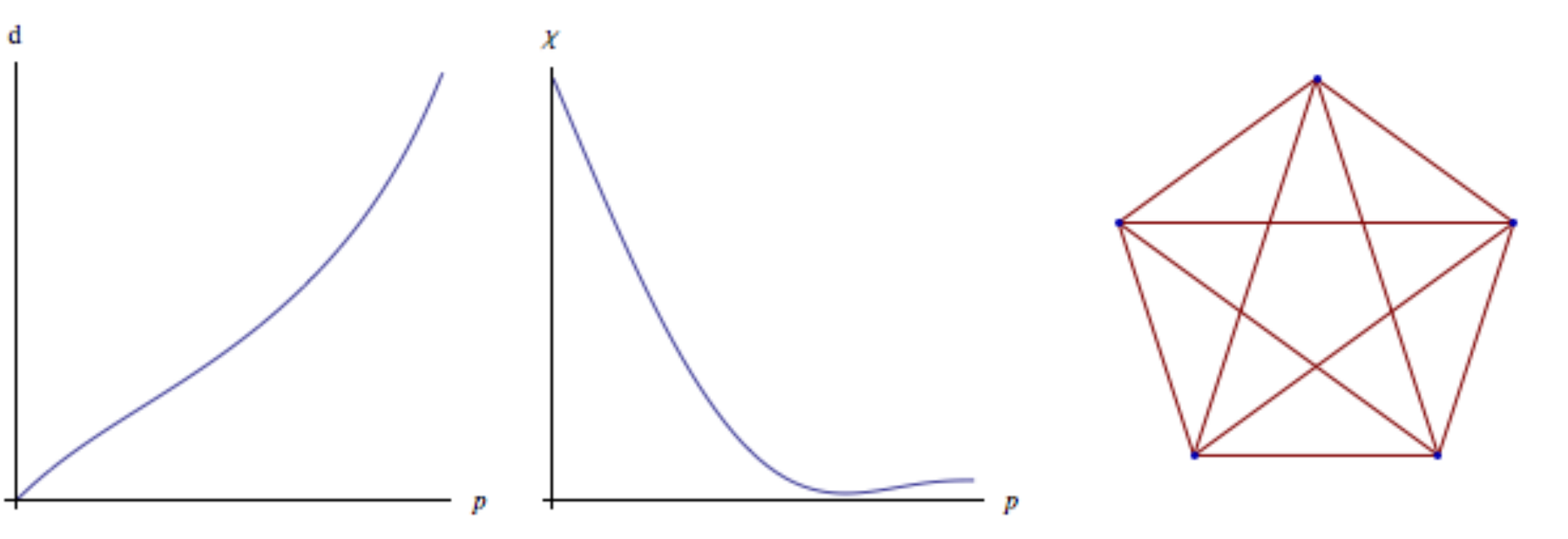}} }
\parbox{7.0cm}{ \scalebox{0.35}{\includegraphics{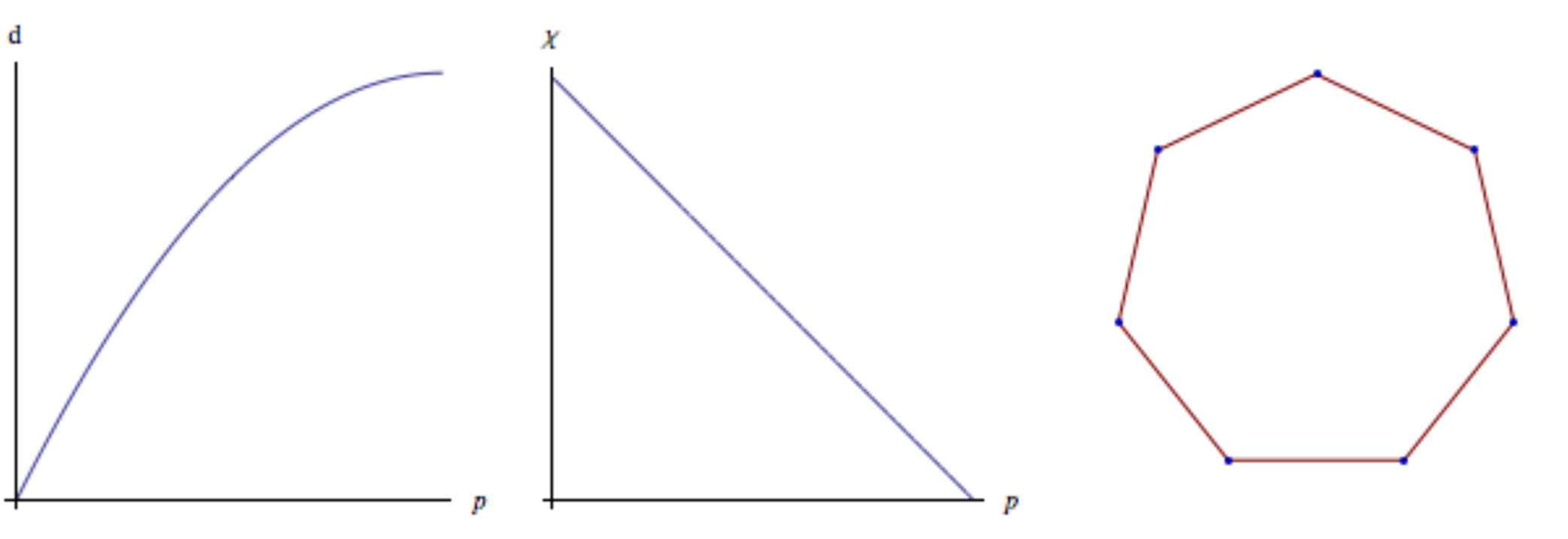}} }
\caption{
The dimension and Euler characteristic signature functions for the complete 
graph $K_5$ are
$$ f(p) = p (p^9-p^8-p^7+5 p^5-3 p^4-5 p^3+10 p^2-6 p+4) $$
$$ g(p) = 5 - 10p + 10p^3 - 5p^6 + p^{10}  \; . $$
The bottom figure shows the case when the host graph is the circular graph $C_7$, where 
$$  f(p) = p(2-p), \hspace{1cm}   g(p) = 7(1-p)  $$ 
The function $f$ is the same for every one dimensional graph, and in general 
$g(p) = |V|(1-p)$. }
\label{signature}
\end{figure}

\hspace{5mm}

\begin{figure}
\parbox{6.0cm}{ \scalebox{0.22}{\includegraphics{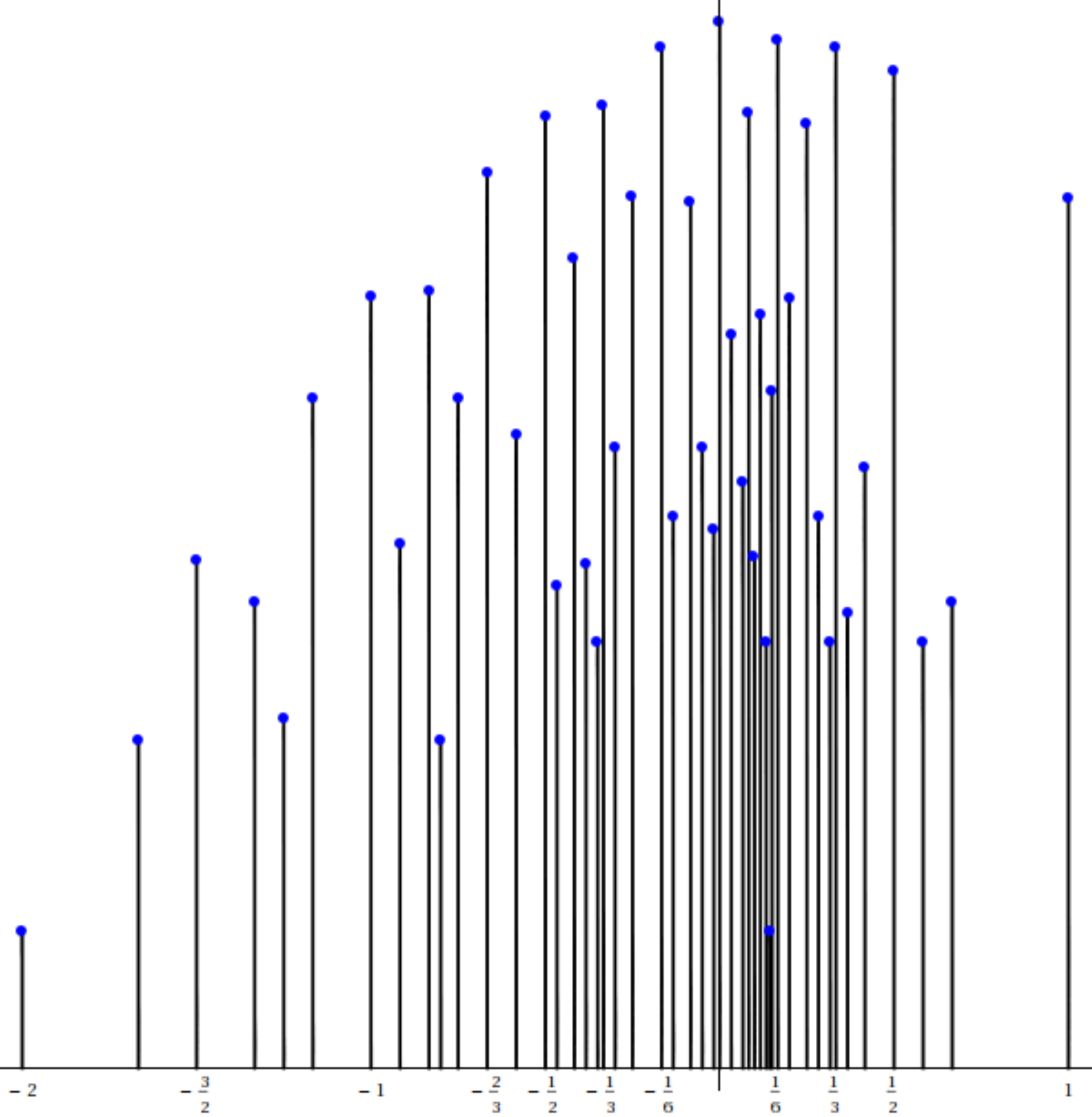}} }
\parbox{6.0cm}{ \scalebox{0.22}{\includegraphics{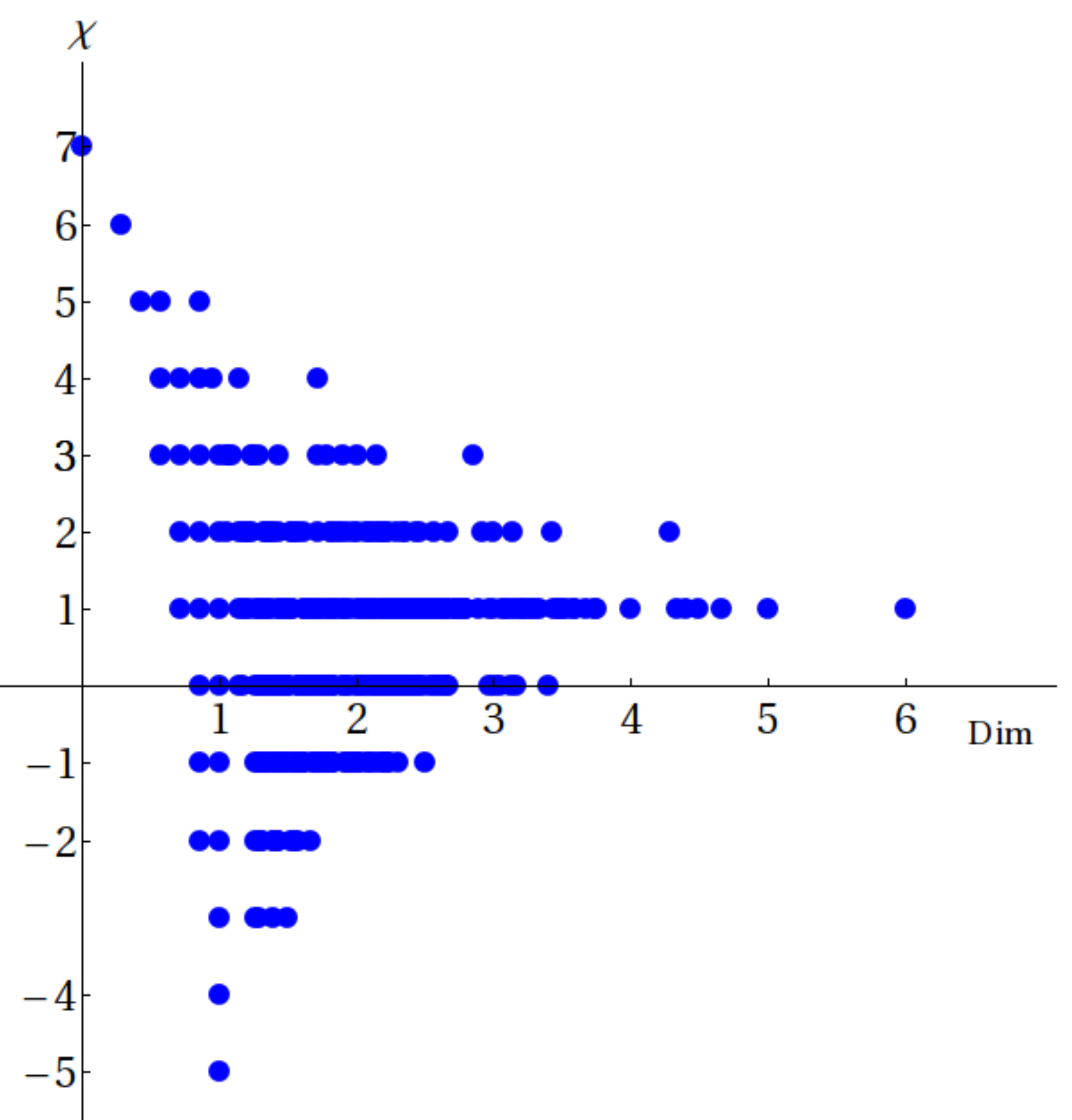}} }
\caption{
The left figure shows the logarithm of the number of vertices having a given curvature 
on the probability space $\{ V \times G(7,1/2)  \}$,
where $V=\{1,2,..,\; 7 \; \}$ is the vertex set on the set $G(7,1/2)$ of graphs with 
$7$ vertices. There are 47 different curvature values attained between $-2$ and $1$. 
The right figure shows the pairs $({\rm dim}(G),\chi(G))$,
where $G$ runs over all $2^{\B{7}{2}}$ graphs of $7$ vertices.
The Euler characteristic takes values between $-5$ and $7$, 
the dimension of the graph takes values between $0$ and $6$.
}
\label{dimchi}
\end{figure}

{\bf Remarks.} \\
{\bf 1.} In the computation to Figure~\ref{dimchi} we observed empirically that 
most nodes are flat: 2784810 of the $7 \cdot 2^{\B{7}{2}}$ vertices
have zero curvature. Graphs are flat for example if they are cyclic and one dimensional, which 
happens in $6!=720$ cases. The minimal curvature $-2$ is attained $7$ times
for star trees with central degree $6$.  \\
{\bf 2.} Extreme cases of in the ``dimension-Euler characteristic plane"
are the complete graph with $({\rm dim},\chi)=(6,1)$,
the discrete graph with $({\rm dim},\chi)=(0,7)$ as well as the
case $({\rm dim},\chi)=(1,-5)$ with minimal Euler characteristic.
An example is a graph with $12$ edges, $7$ vertices and no triangles
obtained by taking an octahedron, remove all 4 vertices in the xy-plane
then connect these 4 vertices with a newly added central point. \\

\begin{table}[h!]
\begin{center}
\begin{tabular}{|lllllll|} \hline
1&2             & 3            &        4        &   5                   &    6                      &  7      \\ \hline
 &              &              &                 &                       &                           &         \\ 
0&$\frac{-1}{4}$&$\frac{-19}{64}$&$\frac{-981}{4096}$&$\frac{-138043}{1048576}$&$\frac{-4889125}{1073741824}$&$\frac{540429585637}{4398046511104}$  \\
 &              &              &                 &                       &                           &         \\  \hline
\end{tabular}
\caption{
The correlations of ${\rm dim}$ and ${\rm \chi}$ for until $n=7$ for $p=1/2$. 
These are exact rational numbers because dimension is a rational number and Euler 
characteristic is an integer. The correlation has become positive the first time at $n=7$. 
For $n$ larger than $7$ we only numerically computed it with Monte Carlo runs in 
Figure~\ref{dimchi2}. 
}
\end{center}
\end{table}

\begin{figure}
\parbox{6.0cm}{ \scalebox{0.22}{\includegraphics{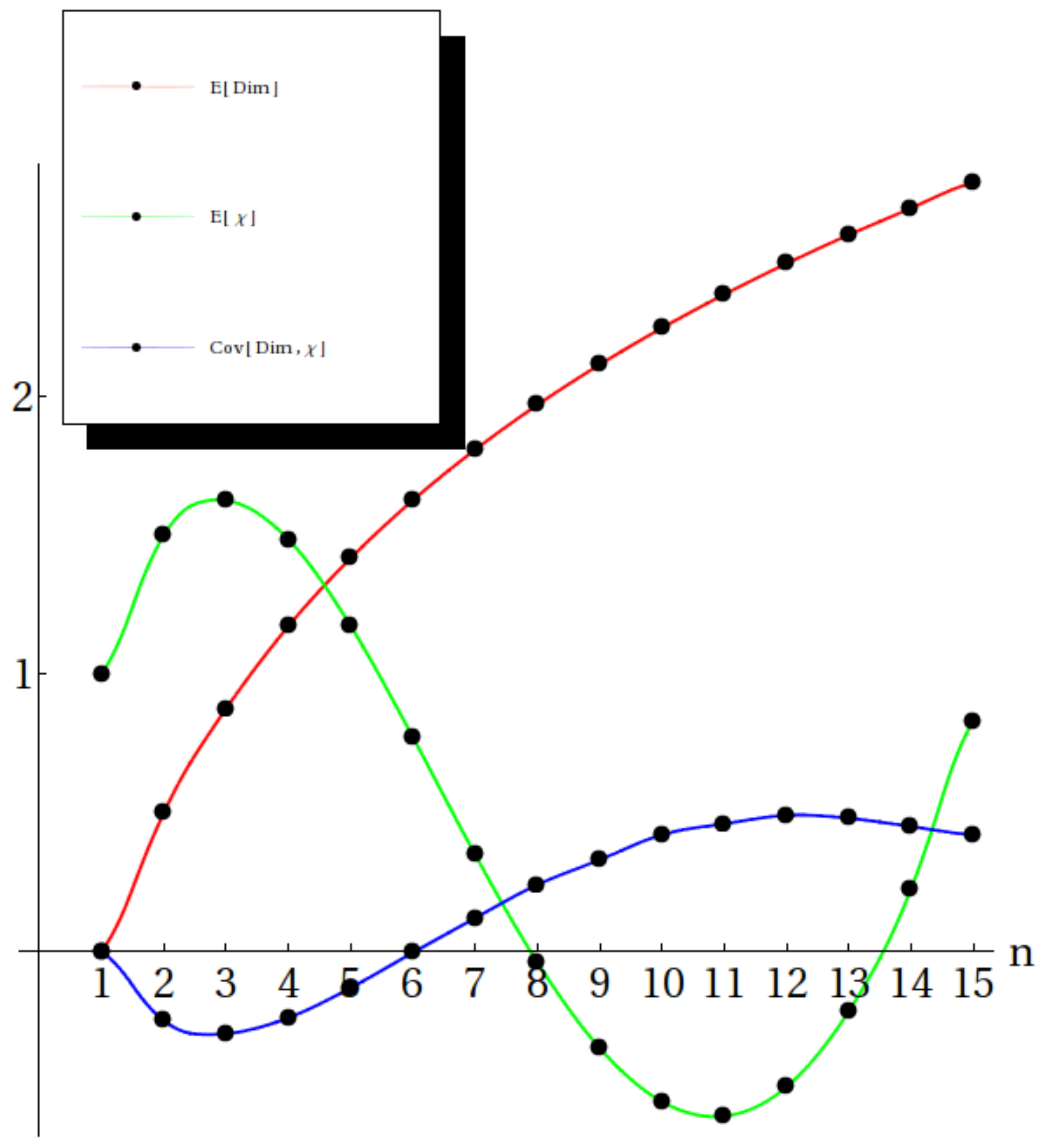}} }
\parbox{6.0cm}{ \scalebox{0.22}{\includegraphics{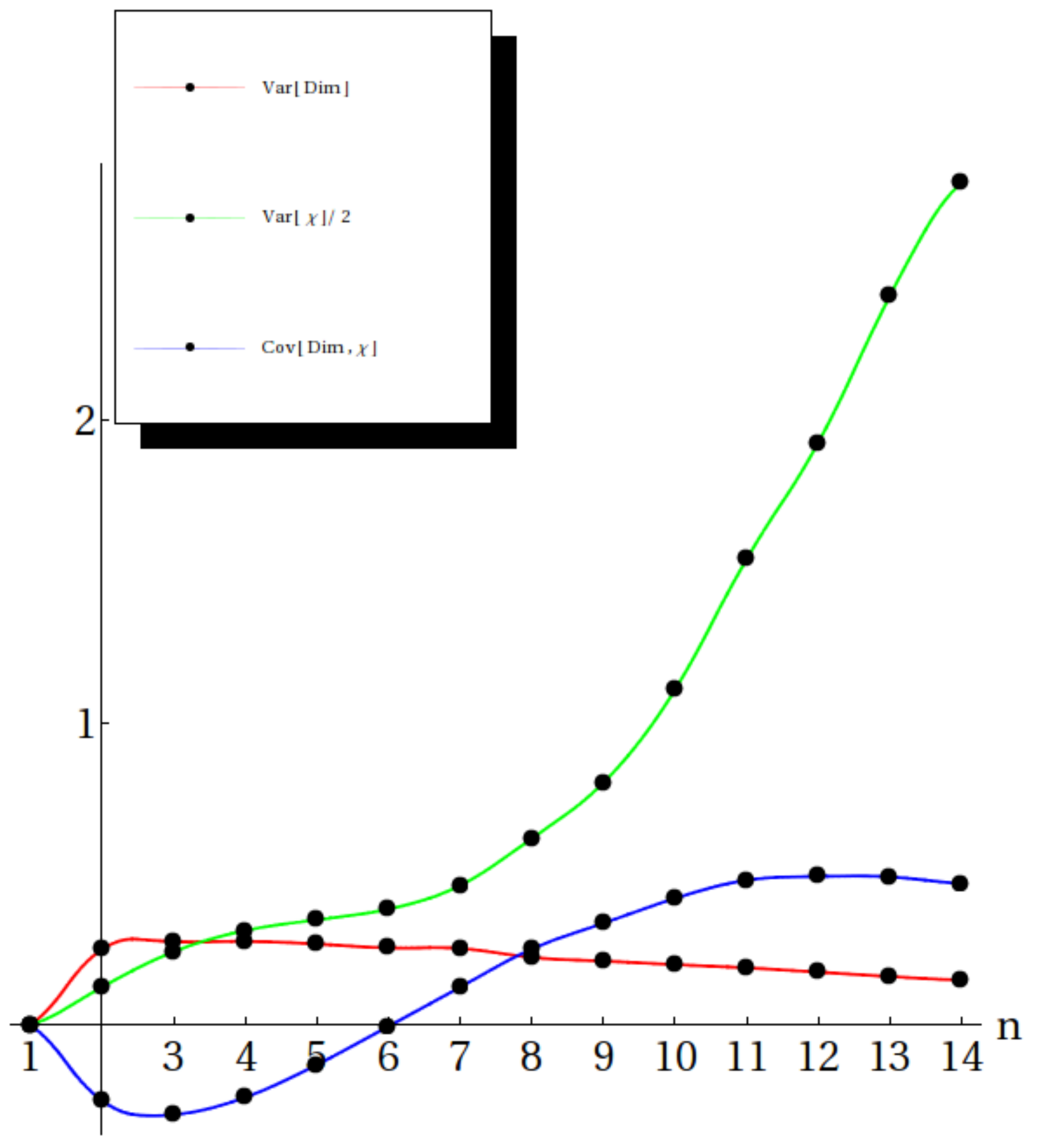}} }
\caption{
The left figure shows the expectation of the random variables ${\rm dim}$ and $\chi$
on $G(n,1/2)$ as well as ${\rm Cov}[{\rm dim},\chi]$ for $n=1,\dots,14$.
The right figure shows the variance ${\rm Var}[{\rm dim}], {\rm Var}[\chi]$
as well as ${\rm Cov}[{\rm dim},\chi]$ on $G(n,1/2)$ again for $n=1,\dots,14$. 
While the variance of the Euler characteristic grows rather rapidly on that interval,
the variance of dimension decreases on that interval. For $n=20$, we measure it to be 
about $0.1$. We can explore experimentally however only a very small part of the
probability space $G(20,1/2)$ consisting of $2^{\B{20}{2}} \sim 1.5 \cdot 10^{53}$ graphs.
}
\label{dimchi2}
\end{figure}

\begin{table}[h!]
\begin{center}
\begin{small}
\begin{tabular}{|lllllllllllll|} \hline
n   & 1 &   2 & 3 &    4 &    5 &    6 &    7 &    8 &    9 &   10 & 11  &  12 \\ \hline
min & 1 & 1/2 & 0 & -1/2 &   -1 & -1.5 & $-2$ & $ -2.5$ & $-3$ & $-3.5$ & $-4$  &  $-4.5$ \\ 
max & 1 &   1 & 1 &    1 &    1 &    1 & $ 1$ & $\geq   1$ &  $\geq    1$ & $\geq 4/3$ & $\geq 3/2$ &  $\geq 5/3$ \\  \hline
\end{tabular}
\end{small}
\end{center}
\caption{
The maximal and minimal curvature which can occur at points of graphs with $n$ vertices. 
This was settled for $n=1,\dots,7$ by checking over all graphs. The minimal curvatures are 
obtained at star shaped trees, where curvature satisfies $K(v) = 1-{\rm deg}(v)/2$.
For larger $n$, we ran Monte Carlo experiments over 10'000 random graphs. 
}
\end{table}

Having looked at the random variables ${\rm dim}$ and $\chi$ on $G(n,p)$ it might be of
interest to study the correlation 
$$ {\rm Corr}[{\rm dim},\chi] = {\rm E}[{\rm dim} \cdot \chi] - {\rm E}[{\rm dim}] \cdot {\rm E}[\chi] \;  $$
between them. The extremal cases of size $0$, order $n$ graphs of Euler characteristic $n$ and dimension $0$
or complete graphs with Euler characteristic $1$ and dimension $n-1$ suggest some anti correlation
between ${\rm dim}$ and $\chi$; but there is no reason, why there should be any correlation trend between 
dimension and Euler characteristic in the limit $n \to \infty$. Like Euler characteristic, it could oscillate. \\

While we have a feel for dimension of a large network as a measure of recursive "connectivity degree", 
Euler characteristic does not have interpretations except in geometric situations with definite constant 
dimensions. For example, for a two dimensional network, it measures the number of components minus the number of "holes",
components of the boundary $\delta G$ the set of points for which the unit sphere is one dimensional
but not a circle. While for geometric $d$ dimensional graphs, 
$\chi(G)$ has an interpretation in terms of Betti numbers, for general networks, both dimensions 
and curvatures varies from point to point and the meaning of Euler characteristic remains an enigma for 
complex networks.

\bibliographystyle{plain}

\end{document}